%

\RequirePackage{etoolbox}
\csdef{input@path}{{style/}{graphics/}}
\documentclass[aap,MSNbibl,seceqn,dvips]{arximspdf}

%

\doi{10.1214/13-AAP976} 
\volume{25}
\issue{2}
\pubyear{2015}
\firstpage{429}
\lastpage{464}
\docsubty{FLA}

\makeatletter
\newcommand{\eqref}[1]{(\ref{#1})}
\newtheorem{thmm}{Theorem}[section]

\newtheorem{lem}[thmm]{Lemma}
\newtheorem{prop}[thmm]{Proposition}
\newproclaim{defn}[thmm]{Definition}
\newproclaim{nota}[thmm]{Notation}
\newproclaim{ass}[thmm]{Assumption}
\newproclaim{asss}[thmm]{Assumptions}
\newproclaim{stass}[thmm]{Standing Assumption}
\newproclaim{rem}[thmm]{Remark}
\newproclaim{exa}[thmm]{Example}
\newcommand{\Real}{\mathbb{R}}
\newcommand{\Natural}{\mathbb{N}}
\newcommand{\prob}{\mathbb{P}}
\newcommand{\Time}{\mathbb{T}}
\newcommand{\Stop}{\mathcal{T}}
\newcommand{\qprob}{\mathbb{Q}}
\newcommand{\expec}{\mathbb{E}}

\newcommand{\ame}{\mathsf{AE}_T}

\newcommand{\F}{\mathcal{F}}
\newcommand{\G}{\mathcal{G}}
\newcommand{\Sl}{\mathcal{S}}
\newcommand{\Hcal}{\mathcal{H}}
\newcommand{\ud}{\mathrm d}
\newcommand{\sign}{\operatorname{\mathsf{sign}}}

\newcommand{\oPhi}{\overline{\Phi}}

\newcommand{\oprob}{\overline{\prob}}

\newcommand{\up}{\uparrow}

\newcommand{\oK}{{\overline{K}}}
\newcommand{\oL}{{\overline{L}}}

\newcommand{\B}{\mathcal{B}}

\newcommand{\dya}{\mathbb{D}}

\newcommand{\Shat}{\overline{S}}

\newcommand{\Xhat}{\widehat{X}}

\newcommand{\oF}{\overline{\F}}
\newcommand{\oOmega}{\overline{\Omega}}
\newcommand{\Leb}{\operatorname{\mathsf{Leb}}}

\newcommand{\X}{\mathcal{X}}
\newcommand{\bF}{\mathbf{F}}

\newcommand{\bG}{\mathbf{G}}
\newcommand{\indic}{\mathbb{I}}

\newcommand{\Var}{\operatorname{\mathsf{Var}}}

\newcommand{\oX}{\overline{X}}

\makeatother

\begin{document}
\begin{frontmatter}

\title{On the stochastic behaviour of optional processes up to random times}
\runtitle{Optional processes up to random times}

\begin{aug}
\author[A]{\fnms{Constantinos}~\snm{Kardaras}\corref{}\ead[label=e1]{k.kardaras@lse.ac.uk}\thanksref{T1}}
\runauthor{C. Kardaras}
\affiliation{London School of Economics and Political Science}
\thankstext{T1}{Supported in part by NSF Grant DMS-09-08461.}
\address[A]{Statistics Department\\
London School of Economics and Political Science\\
10 Houghton Street\\
London, WC2A 2AE\\
United Kingdom\\
\printead{e1}} 
\end{aug}

\received{\smonth{3} \syear{2012}}
\revised{\smonth{10} \syear{2013}}

%
\begin{abstract}
In this paper, a study of random times on filtered probability spaces
is undertaken. The main message is that, as long as distributional
properties of optional processes up to the random time are involved,
there is no loss of generality in assuming that the random time is
actually a randomised stopping time. This perspective has advantages in
both the theoretical and practical study of optional processes up to
random times. Applications are given to financial mathematics, as well
as to the study of the stochastic behaviour of Brownian motion with
drift up to its time of overall maximum as well as up to last-passage
times over finite intervals. Furthermore, a novel proof of the
Jeulin--Yor decomposition formula via Girsanov's theorem is provided.
\end{abstract}

%
\begin{keyword}[class=AMS]
\kwd{60G07}
\kwd{60G44}
\end{keyword}
\begin{keyword}
\kwd{Random times}
\kwd{randomised stopping times}
\kwd{times of maximum}
\kwd{last passage times}
\end{keyword}
\end{frontmatter}

\section*{Introduction}\label{sec1}
Consider a filtered measurable space $(\Omega, \bF)$, where $\bF=
(\F_t)_{t
\in{\Real_+}}$ is a right-continuous filtration, as well as an underlying
sigma-algebra $\F$ over $\Omega$ such that $ \F\supseteq\F_{\infty}:= \bigvee_{t \in\Real_+} \F_t$, where the last set-inclusion may be
strict. A \textsl{random time} is a $[0,\infty]$-valued, $\F$-measurable
random variable. The interplay between random times and the filtration
$\bF$ goes a long way back, with the pioneering work of \cite
{MR509204,MR511775,MR519996}; see also the volume
\cite{MR604176}. Interest in random times has been significant, especially
in connection with applications in financial mathematics, such as
reduced-form credit risk modelling; see, for example, \cite{MR1802597,MR1722700} and~\cite{MR2811019}.

A common approach to \emph{constructing} random times is via the use of
randomised stopping times (also called Cox's method; see \cite{Lando}).
Let $\qprob$ be a probability on $(\Omega, \F)$, and suppose that there
exists an $\F$-measurable random variable $U$ that is stochastically
independent of $\F_{\infty}$ and has the standard uniform law under
$\qprob$. For a given $\bF$-adapted, right-continuous and
nondecreasing process $K = (K_t)_{t \in{\Real_+}}$ such that $0 \leq K
\leq
1$, define the random time $\psi:= \inf\{t \in{\Real_+}| K_t \geq
U\}$, where by convention we set $\psi= \infty$ if the last set
is empty. For such a duple $(\psi, \qprob)$, we say that $\psi$
is a
\emph{randomised stopping time} on $(\Omega, \F, \bF, \qprob)$.
Randomised stopping
times have several noteworthy properties:

\begin{itemize}
\item The independence of $U$ and $\F_{\infty}$ under $\qprob$ implies
that $\qprob[\psi> t | \F_t] = 1 - K_t$, for all $t \in{\Real_+}$.
Therefore, $1 - K$ represents the conditional survival process
associated to $\psi$ under \emph{any} probability $\qprob$ which makes
$U$ and $\F_{\infty}$ independent. The latter fact is useful in
modelling, for example, since $\qprob[\psi\leq t] = \expec_\qprob[K_t]$
holds for $t \in{\Real_+}$, $\qprob$ can be chosen in order to
control the
unconditional distribution of~$\psi$, while keeping the conditional
survival probabilities fixed.
\item Although $\psi$ is not a stopping time on $(\Omega, \bF)$, it
is in some
sense very close to being one. Indeed, $\psi$ is a stopping time of an
initially enlarged filtration, defined as the right-continuous
augmentation of $(\F_t \vee\sigma(U))_{t \in{\Real_+}}$. Importantly,
due to the independence of $U$ and $\F_{\infty}$ under $\qprob$, each
martingale on $(\Omega, \bF, \qprob)$ is also a martingale on the
space with the
enlarged filtration---in other words, the immersion property (\cite
{MR1648165}, also called hypothesis $(\Hcal)$ in \cite{MR511775})
holds. This opens the door to major theoretical analysis of such random
times using tools of martingale theory.
\item From a more practical viewpoint, it is straightforward to
simulate processes up to time $\psi$ under $\qprob$. One first
simulates a uniform random variable $U$; then, in an independent
fashion, one continues with simulating the process $K$ until the point
in time that it exceeds $U$, along with other processes of interest.
\end{itemize}

In view of the usefulness of randomised stopping times, it is natural
to explore their generality. Of course, it is not possible that an
arbitrary random time is a randomised stopping time, since for the
latter there is a need for the extra ``randomisation'' coming from the
uniform random variable. There is a further, more fundamental reason
that an arbitrary random time cannot be realised as a randomised
stopping time. Typically, for a random time $\rho$ on a filtered
probability space $(\Omega, \F, \bF, \prob)$, the nonnegative
process $\Real_+ \ni t
\mapsto\prob[\rho> t | \F_t]$ fails to be nonincreasing, which
would have to be the case if $\rho$ was a randomised stopping time on
$(\Omega, \bF, \prob)$. Nevertheless, the main message of the paper
is the following:

\begin{quote}
With a given a pair $(\rho, \prob)$ of a random time $\rho$ and a
probability $\prob$ on $(\Omega, \F, \bF)$, one can \emph
{essentially} associate
a pair $(\psi, \qprob)$, where $\qprob$ is a probability on $(\Omega, \F
)$ and $\psi$ is a randomised stopping time on $(\Omega, \F, \bF,
\qprob)$, such that
for \emph{any} $\bF$-optional process $Y$, the law of $(Y_{\rho
\wedge
t})_{t \in{\Real_+}}$ under $\prob$ is identical to the law of
$(Y_{\psi
\wedge t})_{t \in{\Real_+}}$ under $\qprob$.
\end{quote}
Therefore, as long as distributional properties of optional processes
on $(\Omega, \bF)$ under $\prob$ \emph{up} to the random time $\rho
$ are
concerned, there is absolutely no loss of information in passing from
$(\rho, \prob)$ to the more workable pair $(\psi, \qprob)$.

There is a reason for the qualifying ``essentially'' in the claim that
the above association can be carried out. To begin with, $\F$ should be
large enough to support a random variable $U$ that will be independent
of $\F_{\infty}$ under $\qprob$. This is hardly a concern; if the
original filtered space $(\Omega, \F, \bF)$ is not rich enough, one
can always
enlarge it in a minimal way, without affecting the structure of $\bF$,
in order to make the previous happen. However, there is another, more
technical obstacle. As will be argued in Section~\ref{sec: canonical
pair} of the text, what is guaranteed is the existence of a
nonnegative local martingale $L$ on $(\Omega, \bF, \prob)$ with
$L_0 = 1$ that is a
candidate for a local (through a specific localising sequence of
stopping times) density process of $\qprob$ with respect to $\prob$.
Then an argument ensuring that a consistent family of probabilities
constructed in ever-increasing sigma-algebras has a countably additive
extension to the limiting sigma-algebra is needed. Such an issue has
appeared in different contexts in stochastic analysis; see \cite
{MR0309184,MR0368131,MR1339738}. Under appropriate
topological assumptions on the underlying filtrations, for example,
working on canonical path-spaces as discussed in \cite{MR2169627}; one
can successfully construct a probability $\qprob$ out of $L$.

The aforementioned purely technical issue notwithstanding, the
usefulness of the above philosophy is evident. In fact, as will be made
clear in the text, even if the probability $\qprob$ cannot be
constructed, the representation pair consisting of the process $K$ in
the definition of $\psi$ and the local martingale $L$ on $(\Omega,
\bF, \prob)$
encodes significant information regarding the structure of random times.

In order to carry out the above-described program in practice, given a
random time $\rho$ on $(\Omega, \F, \bF, \prob)$ one needs to
identify the pair $(K,
L)$ associated with $\rho$. There are indeed formulas in the paper that
provide $(K, L)$ in terms of the process $\Real_+ \ni t \mapsto\prob
[\rho> t | \F_t]$ of conditional survival probabilities of $\rho$,
as well as the optional compensator on $(\Omega, \bF, \prob)$ of the
nondecreasing
process $\Real_+ \ni t \mapsto\indic_{\{\rho\leq t\}}$. Closed-form
expressions for the previous quantities are sometimes available---this
is, for example, the case when times of maximum and last-passage times
for certain nonnegative local martingales are considered. In order to
illustrate the theoretical results, applications are presented in the
context of financial mathematics, and discussion is provided regarding
times of maximum and last-passage times for finite time-horizon
Brownian motion with drift.

The dominant approach toward the study random times in the literature
is that of \emph{progressive enlargement of filtrations}. This theory
has produced remarkable results, one of the most important due to
Jeulin and Yor \cite{MR519998}, providing the canonical representation
of semimartingales up to random times under progressive enlargement of
filtrations. This result is revisited in the text, where a novel proof
of the Jeulin--Yor decomposition formula via the use of Girsanov's
theorem---a certainly more familiar result---facilitates understanding
by shedding an extra intuitive light.

\subsection*{Structure of the paper}
This introductory part ends with general remarks that will be used
throughout the text. In Section~\ref{sec: canonical pair}, the
canonical pair of processes associated with a random time is
introduced, and certain of its properties are explored in Section~\ref
{sec: canonical pair}. Section~\ref{sec: equal in law} deals with a
rigorous statement of the main message of the paper, regarding the law
of optional processes up to random times. Section~\ref{sec: examples}
contains some first examples. Section~\ref{sec: finance} presents
applications of the theory in financial settings. Section~\ref{sec: BM
on finite interv} contains a discussion on the stochastic behaviour of
Brownian motion with drift over finite time-intervals until its time of
maximum and until last-passage times. Finally, in Section~\ref{sec:
jeulin-yor} the statement and a new proof of the Jeulin--Yor
decomposition formula is provided.

\subsection*{General probabilistic remarks}
The underlying filtration $\bF= (\F_t)_{t \in{\Real_+}}$ is assumed
to be
right-continuous, but it will \emph{not} be assumed that each $\F_t$,
$t \in{\Real_+}$, is completed with $\prob$-null sets of $\F$.
Although this
relaxation calls for some technicalities, it is essential in the
development; indeed, the need for defining a probability on $(\Omega,
\F)$ that is not absolutely continuous with respect to $\prob$ (not
even locally, on each $\F_t$, $t \in\Real_+$) will arise. An extensive
part of the general theory of stochastic processes can be developed
without the completeness assumption on filtrations, as long as
properties that hold ``everywhere'' are asked to hold in an ``almost
everywhere'' sense. (Of course, there are exceptions to the previous
rule, e.g., the so-called debut theorem fails if the filtration
is not completed; see the discussion in \cite{RW-1}, Chapter II, Section 75.) The
interested reader can refer to \cite{MR1943877}, Chapters I and~II, for results in this slightly nonconventional framework
that shall be used throughout the paper. Versions of the section
theorem from \cite{MR1219534}, IV Section~1, where again the filtration
is not assumed to be completed, will also be useful.

For a c\`adl\`ag process $X$, define the process $X_- = (X_{t-})_{t
\in
{\Real_+}}$, where $X_{t-}$ is the left-limit of $X$ at $t \in(0,
\infty)$;
by convention, $X_{0-} = 0$. Also, $\Delta X:= X - X_-$. Every
predictable process $H$ is supposed to satisfy $H_0 = 0$. For any
$[0,\infty]
$-valued, $\F$-measurable random variable $\rho$ and any process $X$,
$X^\rho= X_{\rho\wedge\cdot}$ is defined as usual to be the process
$X$ stopped at $\rho$. For any c\`adl\`ag process $X$, we set $X^\up:= \sup_{t \in[0, \cdot]} X_t$, as well as $X^* = \sup_{t \in[0,
\cdot]} |X_t| = (|X|)^\up$.\vspace*{1pt}

Whenever $H$ and $X$ are processes such that $X$ is a semimartingale to
be used as an integrator and $H$ can be used as integrand with respect
to $X$, we use $\int_{[0, \cdot]} H_t \,\ud X_t$ to denote the integral process.
For a detailed account of stochastic integration, see~\cite{MR1943877}.

If not stated otherwise, a property of a stochastic process (such as
nonnegativity, path right-continuity, etc.) is assumed to hold \emph
{everywhere}; we make explicit note if these properties hold almost
surely with respect to some probability on $(\Omega, \F)$.
When processes that are (local) martingales, super-martingales, etc.,
are considered, it is tacitly assumed that their paths are almost
surely c\`adl\`ag with respect to the probability under consideration;
for example, local martingales on $(\Omega, \bF, \prob)$ have $\prob
$-a.s. c\`adl\`ag paths.

In this paper, we \emph{always} work under the following.

\begin{stass} \label{stass: finiteness}
All random times $\rho$ are assumed to satisfy $\prob[\rho< \infty]
= 1$.
\end{stass}

The reason for the above assumption is purely conventional; under its
force, $t = \infty$ does not appear explicitly in the time-indices
involved, something that would be unusual and create unnecessary
confusion. We stress, however, that Assumption~\ref{stass: finiteness}
in practice does not entail any loss of generality whatsoever. Indeed,
a~simple deterministic time-change of $[0, \infty]$ to $[0,1]$ on the
time-indices of filtrations, processes, etc., makes any $[0,\infty]$-valued
random time actually bounded; then all the results of the paper apply.

\section{A canonical pair associated with a random time} \label{sec:
canonical pair}

We keep all notation and remarks that appeared in the \hyperref[sec1]{Introduction}. In particular, Assumption~\ref{stass: finiteness} will always
be tacitly in force.

\subsection{Construction of the canonical pair} \label{subsec: canon pair}
The following result is the point of our departure.

\begin{thmm} \label{thmm: doleans sharpened}
Let $\rho$ be a random time on $(\Omega, \F, \bF, \prob)$. Then
there exists a pair of
processes $(K, L)$ with the following properties:
\begin{longlist}[(1)]
\item[(1)]$K$ is $\bF$-adapted, right-continuous, nondecreasing, with $0
\leq K \leq1$. 
\item[(2)]$L$ is a nonnegative process with $L_0 = 1$ that is a local
martingale on $(\Omega, \bF, \prob)$.
\item[(3)] For any nonnegative optional processes $V$ on $(\Omega, \bF)$, it
holds that
\[
\expec_\prob[V_\rho] = \expec_\prob \biggl[\int
_{{\Real_+}} V_t L_t \,\ud K_t
\biggr].
\]
\item[(4)]
$\int_{{\Real_+}}\indic_{\{K_{t-} = 1\}} \,\ud L_t = 0$ and $\int_{{\Real_+}}
\indic
_{\{L_t = 0\}} \,\ud K_t = 0$ hold $\prob$-a.s.
\end{longlist}
Furthermore, a pair $(L, K)$ that satisfies the above requirements is
essentially unique, in the following sense: if $(K', L')$ is another
pair that satisfies the above requirements, then $K$ is $\prob
$-indistinguishable from $K'$, while $\prob[L_t = L'_t, \forall t
\in{\Real_+}| K_{\infty} > 0] = 1$.
\end{thmm}

\begin{defn}
For a random time $\rho$ on $(\Omega, \F, \bF, \prob)$, the pair
$(K, L)$ that
satisfies requirements (1), (2), (3) and (4) of Theorem~\ref{thmm:
doleans sharpened} will be called \textsl{the canonical pair associated
with $\rho$}.
\end{defn}

\begin{rem}
Let $\rho$ be a random time on $(\Omega, \F, \bF, \prob)$ with
associated pair $(K,
L)$. Then $\rho$ is a stopping time on $(\Omega, \bF)$ if and only
if $K =
\indic_{[\![ \rho, \infty[\![}$ (and, in this case, $L \equiv1$ will
hold). Indeed, if $\rho$ is a stopping time, $K':= \indic_{[\![ \rho,
\infty[\![}$ is $\bF$-adapted, nonnegative and nondecreasing, and
$0 \leq K' \leq1$ holds. Furthermore, $\expec_\prob[V_\rho] =
\expec_\prob
[\int_{{\Real_+}} V_t \,\ud K'_t]$ holds for all nonnegative and optional
$V$ on
$(\Omega, \bF)$. By the essential uniqueness under $\prob$ of the canonical
pair associated with $\rho$, we obtain $K = \indic_{[\![ \rho, \infty
[\![}$ (and $L = 1$). Conversely, assume that $K = \indic_{[\![ \rho,
\infty[\![}$; as $K$ is $\bF$-adapted, $\rho$ is a stopping time.
\end{rem}

Given a random time $\rho$ on $(\Omega, \F, \bF, \prob)$, it will
now be explained how
the associated canonical pair $(K, L)$ is constructed. We follow the
proof of \cite{Kar09}, Theorem~2.1, which contains Theorem~\ref{thmm:
doleans sharpened} as a special case. Only details which will be
essential in the present development are provided. We also introduce
some further notation to be used throughout.

Let $Z$ be the nonnegative c\`adl\`ag super-martingale on $(\Omega,
\bF, \prob)$
that satisfies $Z_t =\prob[\rho> t | \F_t]$ for all $t \in
\Real
_+$. (The fact that such a $\prob$-a.s. c\`adl\`ag version $Z$ exists
follows from the right-continuity of the filtration $\bF$ and the
right-continuity of the function $\Real_+ \ni t \mapsto\prob[\rho>
t]\in[0,1]$ by an application of \cite{MR1219534}, Theorem~II.2.44.)
In view of Assumption~\ref{stass: finiteness}, $Z_\infty:= \lim_{t
\to\infty}
Z_t$ is $\prob$-a.s. equal to zero. Note that $Z$ is the conditional
survival process associated to a random time by Az\'ema; see \cite
{MR604176} and the references therein. Also, let $A$ be the dual
optional projection of $\indic_{[\![ \rho, \infty[\![}$ on $(\Omega,
\bF, \prob)$;
in other words, $A$ is the unique (up to $\prob$-evanescence) $\bF
$-adapted, c\`adl\`ag, nonnegative and nondecreasing process such that
$\expec_\prob[V_\rho] = \expec_\prob[\int_{{\Real_+}} V_t \,\ud
A_t]$ holds
for all
nonnegative optional process $V$ on $(\Omega, \bF)$. Then $N:= Z +
A$ is a
nonnegative martingale on $(\Omega, \bF, \prob)$ with $N_t = \expec
_\prob[A_\infty| \F_t]$, for all $t \in{\Real_+}$.

\begin{rem} \label{rem: good version of A}
Since we do not assume that the $\F_0$ contains all $\prob$-null sets
of $\F$, the properties of $A$ being c\`adl\`ag, nondecreasing and
nonnegative only are valid for $\prob$-a.s. every path. However, one
can alter $A$ to have them holding identically. Indeed, with $\dya$
denoting a countable and dense subset of $\Real_+$, define $A':=
\lim_{\dya\ni t \downarrow\cdot}  ( \sup_{s \in[0, t] \cap
\dya
} (\max\{A_s, 0 \})  )$. It is easily seen that this new
process $A'$ is $\bF$-adapted (the right-continuity of $\bF$ is
essential here), c\`adl\`ag, nondecreasing and nonnegative, and that $A
= A'$ up to $\prob$-evanescence. It is possible that $A$ can explode to
$\infty$ in finite time, but this happens on a set of zero (outer)
$\prob$-measure and will not affect the results that follow in any way.
Therefore, we might, and shall, assume in the sequel that $A$ is
c\`adl\`ag, nondecreasing and nonnegative everywhere.
\end{rem}

\begin{rem}
The expected total mass of $A$ over ${\Real_+}$ under $\prob$ is
$\expec_\prob
[A_\infty] = 1$. If $\prob[A_\infty> 1] = 0$, in which case $\prob
[A_\infty= 1] = 1$, defining $K:= A$ (more precisely, $K:= \min
\{A, 1\}$) and $L:= 1$ would suffice for the purposes of Theorem~\ref
{thmm: doleans sharpened}. However, in all other cases of random times
we have $\prob[A_\infty> 1] > 0$, and the pair $(K, L)$ is constructed
from $(A, Z)$ as will be shown below.
\end{rem}

We continue with providing the definition of the pair $(K, L)$.
Consider the stopping time $\zeta_0:= \inf\{t \in{\Real_+}| Z_{t-}
= 0 \mbox{ or } Z_t = 0\}$; in fact, $\zeta_0$ actually is the
terminal time of movement for both $Z$ and $A$. The process $K$ is
defined via
%
\begin{eqnarray}
\label{eq: real defn of K} K &=& 1 - \prob[\rho> 0] \exp \biggl(- \int_{(0, \zeta_0 \wedge\cdot]}
\frac{\ud A_t}{Z_t + \Delta A_t} \biggr)
\nonumber
\\[-8pt]
\\[-8pt]
\nonumber
&&{}\times\prod_{t \in(0, \zeta_0 \wedge
\cdot
]} \biggl(
\biggl(1 - \frac{\Delta A_t}{Z_t + \Delta A_t} \biggr) \exp \biggl(\frac{\Delta
A_t}{Z_t + \Delta A_t} \biggr)
\biggr),
\end{eqnarray}
where by convention the product of an empty set of numbers is equal to
one. It is clear that $K$ is $\bF$-adapted, c\`adl\`ag, nondecreasing and
$[0,1]$-valued on $[\![ 0, \zeta_0 [\![$. A~little care has to be
exercised in the value of $K$ at $\zeta_0$. On $\{\Delta A_{\zeta_0}
= 0\}$, it simply holds that $K_{\zeta_0} = K_{\zeta_0 -}$. On $\{
\Delta A_{\zeta_0} > 0\}$ it holds that $K_{\zeta_0} = 1$ because the
product term on the right-hand side of equation \eqref{eq: real defn of
K} is zero. The process $K$ remains constant after $\zeta_0$. In order
to get some intuition on the definition of $K$, note that the
differential equation that the process $K$ defined in \eqref{eq: real
defn of K} satisfies is
%
\begin{equation}
\label{eq: defn of K} \frac{\ud K_t}{1 - K_{t-}} =
\frac{\ud A_t}{Z_t + \Delta A_t} \qquad\mbox{for } t \in[0,
\zeta_0).
\end{equation}
For fixed $t \in[0, \zeta_0)$, $Z_t + \Delta A_t = \prob[\rho\geq t
| \F_t]$ represents the expected total remaining ``life'' of $\rho$
on $[t, \infty]$, conditioned on $\F_t$; then, formally, $\ud A_t /
(Z_t + \Delta A_t)$ is the ``fraction of remaining life of $\rho$ spent
at $t$.'' The equivalent ``fraction of remaining life spent at $t$''
for $K$ would be $\ud K_t / (1 - K_{t-})$. (The previous quantity is
based on the understanding that $\prob[K_\infty= 1] = 1$,
although this is not always the case as will be shown later in Remark~\ref{rem: K less than one, L strict loc mart }.) To get a feeling of
how $L$ should be defined, observe that $(Z + \Delta A) \Delta K =
(1 - K_-) \Delta A$ implies that $(Z + \Delta A) (1 - K)= (1 - K_-) Z$.
Therefore, from \eqref{eq: defn of K} we obtain that $\ud K_t / (1 -
K_t) = \ud A_t / Z_t$ holds for $t \in[0, \zeta_0)$, which implies
that $Z_t \,\ud K_t = (1 - K_{t}) \,\ud A_t$ holds for $t \in{\Real_+}$. Since
$\ud A_t = L_t \,\ud K_t$ has to hold for $t \in{\Real_+}$ in view of property
(3) in Theorem~\ref{thmm: doleans sharpened}, we obtain $L (1 - K) =
Z$. Using the previous equality and It\^o's formula, we obtain the dynamics
%
\begin{equation}
\label{eq: defn of L} \frac{\ud L_t}{L_{t-}} = \frac{\ud N_t}{Z_{t-}},\qquad t \in[0,
\zeta_0],
\end{equation}
where recall that $N = Z + A$. Equation \eqref{eq: defn of L} can
actually be used as the definition of $L$, which becomes equal to the
stochastic exponential of the local martingale $\int_0^{\zeta_0
\wedge
\cdot} (1/Z_{t-}) \,\ud N_t$. (One has to be quite careful here: the
latter process might not be defined at time $\zeta_0$ and onward due to
explosion, which will imply that, $\prob$-a.s., $L_t = 0$ for all $t
\geq\zeta_0$. The treatment in \cite{Kar09}, Section~2.3, makes sure
that all such issues are dealt with.) Then the relationship $Z = L(1 -
K)$ can be shown to hold true. One can check \cite{Kar09}, Section~2.3,
for all the remaining technical details of the proof.

\begin{rem} \label{rem: L as mult decomp}
When $\Delta K$ is $\prob$-evanescent (which happens exactly when
$\Delta A$ is $\prob$-evanescent), the formula $Z = L(1 - K)$ implies
that $L$ coincides with the local martingale on $(\Omega, \bF, \prob
)$ that appears
in the multiplicative Doob--Meyer decomposition of the nonnegative
$(\Omega, \bF, \prob)$-super-martingale $Z$. This fact provides the
most efficient
way to calculate the canonical pair associated with a random time that
avoids all stopping times. (For the definition and characterisation of
random times avoiding all stopping times, see Section~\ref{subsec:
avoid_stop}.)
\end{rem}

\subsection{A consistent family of probabilities associated with a
random time} \label{subsec: consist fam prob assoc with rand}

Let $\rho$ be a random time on $(\Omega, \F, \bF, \prob)$ with
associated canonical
pair $(K, L)$. Define
%
\begin{equation}
\label{eq: good stop times} \eta_u:= \inf\{t \in{\Real_+}| K_t
\geq u\}\qquad \mbox{for } u \in[0, 1),
\end{equation}
with the convention $\eta_u = \infty$ if the last set is empty. The
nondecreasing family $(\eta_u)_{u \in[0, 1)}$ of stopping times on
$(\Omega, \bF)$ will play a major role in the development. We start
with a
``localisation'' result.

\begin{lem} \label{lem: loc bdd of L}
Let $\rho$ be a random time on $(\Omega, \F, \bF, \prob)$ with
canonical pair $(K,
L)$. For $u \in[0, 1)$, $\prob[L_{\eta_u}^* \leq2 / (1 - u)] = 1$
holds. If $\prob[\eta_u < \infty, \Delta L_{\eta_u} > 0] = 0$, then
$\prob[L_{\eta_u}^* \leq1 / (1 - u)] = 1$.
\end{lem}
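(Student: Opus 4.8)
The plan is to control $L$ separately over the stochastic interval $\dbraco{0,\eta_u}$ and at the single time $\eta_u$, and then combine these via $L^*_{\eta_u}=\big(\sup_{t<\eta_u}L_t\big)\vee L_{\eta_u}$ (recall $L\ge 0$). For the first part I would use that, by construction, $Z=L(1-K)$ up to $\prob$-evanescence, while $0\le Z\le 1$ since $Z_t=\prob[\rho>t\mid\F_t]$. Since $K_t<u$ for every $t<\eta_u$, this gives $\prob$-a.s.
\[
L_t=\frac{Z_t}{1-K_t}\le\frac1{1-u},\qquad t<\eta_u,
\]
hence $\sup_{t<\eta_u}L_t\le 1/(1-u)$ and $L_{\eta_u-}\le 1/(1-u)$ $\prob$-a.s. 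On $\{\eta_u=\infty\}$ this already yields $L^*_{\eta_u}\le 1/(1-u)$, so I may from now on work on $\{\eta_u<\infty\}$; there $\eta_u\le\zeta_0$, because $K$ is constant on $\dbraco{\zeta_0,\infty}$.

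Next I would estimate the jump $\Delta L_{\eta_u}$. On $\{\eta_u<\infty\}$ the sub-cases $L_{\eta_u}=0$ and $Z_{\eta_u-}=0$ are harmless for an upper bound (if $Z_{\eta_u-}=0$ then $\eta_u=\zeta_0$ and $L\equiv 0$ on $\dbraco{\zeta_0,\infty}$, so $L^*_{\eta_u}=\sup_{t<\eta_u}L_t$). Otherwise $Z_{\eta_u-}>0$, the left-limit form of $Z=L(1-K)$ reads $Z_{\eta_u-}=L_{\eta_u-}(1-K_{\eta_u-})$, and the dynamics $\ud L_t/L_{t-}=\ud N_t/Z_{t-}$ on $[0,\zeta_0]$ give
\[
\Delta L_{\eta_u}=L_{\eta_u-}\,\frac{\Delta N_{\eta_u}}{Z_{\eta_u-}}=\frac{\Delta N_{\eta_u}}{1-K_{\eta_u-}}.
\]
Since $N=Z+A$, one has $\Delta N_{\eta_u}=\big(Z_{\eta_u}+\Delta A_{\eta_u}\big)-Z_{\eta_u-}$, and $Z_{\eta_u}+\Delta A_{\eta_u}=\prob[\rho\ge\eta_u\mid\F_{\eta_u}]\le 1$ — this extends the identity $Z_t+\Delta A_t=\prob[\rho\ge t\mid\F_t]$ from deterministic $t$ to the stopping time $\eta_u$, both sides being optional processes that agree at every fixed time and hence up to $\prob$-evanescence. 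Together with $Z_{\eta_u-}\ge 0$ and $K_{\eta_u-}\le u$ this yields $\Delta L_{\eta_u}\le 1/(1-u)$ $\prob$-a.s.

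Finally, writing $L_{\eta_u}=L_{\eta_u-}+\Delta L_{\eta_u}\le 1/(1-u)+(\Delta L_{\eta_u})^+$ and combining with the first part: in general $(\Delta L_{\eta_u})^+\le 1/(1-u)$ gives $L_{\eta_u}\le 2/(1-u)$, so $L^*_{\eta_u}\le 2/(1-u)$; and if $\prob[\eta_u<\infty,\,\Delta L_{\eta_u}>0]=0$ then $(\Delta L_{\eta_u})^+=0$ $\prob$-a.s., hence $L_{\eta_u}\le L_{\eta_u-}\le 1/(1-u)$ and $L^*_{\eta_u}\le 1/(1-u)$.

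The step I expect to require the most care is the passage from "for every fixed $t$" to "at the stopping time $\eta_u$" in the identity $Z_t+\Delta A_t=\prob[\rho\ge t\mid\F_t]$, together with justifying that the $L$-dynamics and the relation $Z=L(1-K)$ may be evaluated at $\eta_u$ when $\eta_u=\zeta_0$ (where $Z$ or $Z_-$ vanishes). With the filtration not assumed complete, this forces an appeal to the optional section theorem and a by-hand treatment of the degenerate sub-cases $L_{\eta_u-}=0$ and $Z_{\eta_u-}=0$.
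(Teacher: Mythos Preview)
Your proof is correct and follows essentially the same route as the paper: bound $L$ on $\dbraco{0,\eta_u}$ via the identity $Z=L(1-K)$ together with $K_-\le u$ and $Z\le 1$, then control the jump at $\eta_u$ using the dynamics $\ud L_t/L_{t-}=\ud N_t/Z_{t-}$. The paper streamlines the endpoint step slightly by bounding $L_{\eta_u}=(Z_{\eta_u}+\Delta A_{\eta_u})/(1-K_{\eta_u-})$ directly via $Z\le 1$ and $\Delta A\le 1$ (the latter from $\Delta A_\tau=\prob[\rho=\tau\mid\F_\tau]$ and the optional section theorem), which sidesteps your appeal to the identity $Z_t+\Delta A_t=\prob[\rho\ge t\mid\F_t]$ at the stopping time $\eta_u$; but this is a cosmetic difference, not a substantive one.
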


\begin{pf}
Fix $u \in[0, 1)$. Since $K_{t-} \leq u$ holds for $t \in[0, \eta_u]$
and $Z_- \leq1$ holds up to $\prob$-evanescence, it follows that
\[
L_- = \frac{Z_-}{1 - K_-} \leq\frac{1}{1 - u} \qquad\mbox{holds } \prob \mbox{-a.s.
on } [\![ 0, \eta_u ] \!],
\]
which implies that $\prob[L^*_{\eta_u -} \leq1/(1 - u)] = 1$. It
remains to check what happens at~$\eta_u$. In case $\prob[\eta_u <
\infty, \Delta L_{\eta_u} > 0] = 0$, $\prob[L_{\eta_u}^* \leq1 /
(1 - u)] = 1$ is immediate. Now, remove the assumption $\prob[\eta
_u < \infty, \Delta L_{\eta_u} > 0] = 0$. We shall use that
$\Delta
A \leq1$ up to $\prob$-evanescence. (Indeed, the equality $\Delta
A_{\tau} = \prob[\rho= \tau| \F_{\tau}]$ holds $\prob$-a.s. on
$\{\tau< \infty\}$ for any stopping time $\tau$, since $A$ is the
dual optional projection of $\indic_{[\![ \rho, \infty[\![}$ on
$(\Omega, \bF, \prob)
$. It follows that $\prob[\Delta A_\tau\leq1] = 1$ for any stopping
time $\tau$ and, therefore, that $\Delta A \leq1$ up to $\prob
$-evanescence by \cite{MR1219534}, Theorem~4.10.) Using \eqref{eq: defn
of L}, we obtain, $\prob$-a.s.,
\[
L_{\eta_u} = L_{\eta_u -} + \frac{\Delta N_{\eta_u}}{1 - K_{\eta_u
- }} = \frac{Z_{\eta_u -} + \Delta N_{\eta_u}}{1 - K_{\eta_u - }} =
\frac
{Z_{\eta_u} + \Delta A_{\eta_u}}{1 - K_{\eta_u - }} \leq\frac{2}{1
- u},
\]
which completes the proof.
\end{pf}

In view of Lemma~\ref{lem: loc bdd of L}, for any $u \in[0, 1)$ one can
construct a probability measure ${\mathbb{Q}_u}$ on $(\Omega, \F)$
via the
recipe $\ud{\mathbb{Q}_u}= L_{\eta_u} \,\ud\prob$. The collection
$({\mathbb{Q}_u}
)_{u \in[0, 1)}$ has the following consistency property: $\qprob_u =
\qprob_v$ on $(\Omega, \F_{\eta_u})$ holds whenever $0 \leq u \leq
v <
1$. It would be very convenient (but not {a priori} clear and
certainly not true in general, as is demonstrated in Example~\ref{exa:
need_for_no_compl}) if one could find a probability $\qprob\equiv
\qprob_1$ on $(\Omega, \F)$ such that $\qprob|_{\F_{\eta_u}} =
{\mathbb{Q}_u}|_{\F_{\eta_u}}$ holds for all $u \in[0, 1)$. This is
indeed the
case in a number of examples, as will be discussed later. The
consequences of the existence of such probability are analysed in
Section~\ref{sec: equal in law}. For the time being, we mention an
auxiliary result.

\begin{lem} \label{lem: K under Q}
For all $u \in[0, 1)$, it holds that ${\mathbb{Q}_u}[L_{\eta_u} > 0]
= 1$ and
${\mathbb{Q}_u}[\eta_u < \infty] = 1$.
\end{lem}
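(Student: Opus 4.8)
The plan is to exploit that $\qprobu$ is absolutely continuous with respect to $\prob$ with bounded Radon--Nikodym density $L_{\eta_u}$, so that both assertions reduce to showing that a suitable $\prob$-expectation of $L_{\eta_u}$ restricted to a ``bad'' event vanishes. First I would record that $\qprobu$ is genuinely a probability: by Lemma \ref{lem: loc bdd of L} the stopped process $L^{\eta_u}$ is a bounded nonnegative local martingale on $\basisp$, hence a uniformly integrable martingale, so $\expecp[L_{\eta_u}] = L_0 = 1$.

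The first assertion $\qprobu[L_{\eta_u} > 0] = 1$ is then immediate, since
\[
\qprobu[L_{\eta_u} = 0] = \expecp\bra{L_{\eta_u}\indic_{\set{L_{\eta_u} = 0}}} = 0 .
\]

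For the second assertion, the key point is to identify $L_{\eta_u}$ on the event $\set{\eta_u = \infty}$. On that event the definition of $\eta_u$ forces $K_t < u$ for every $t \in \zi$, and hence $K_\infty \dfn \limt K_t \leq u < 1$. On the other hand $L$, being a nonnegative local martingale on $\basisp$, is a $\prob$-supermartingale and therefore converges $\prob$-a.s.\ to a finite limit $L_\infty \dfn \limt L_t$; combining the relation $Z = L(1 - K)$ from the construction of the canonical pair with the fact that $Z_\infty = 0$ holds $\prob$-a.s.\ (which is where Assumption \ref{stass: finiteness} enters) and passing to the limit $t \to \infty$ yields $L_\infty (1 - K_\infty) = 0$ $\prob$-a.s. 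Since $1 - K_\infty \geq 1 - u > 0$ on $\set{\eta_u = \infty}$, this gives $L_\infty = 0$, i.e.\ $L_{\eta_u} = 0$, $\prob$-a.s.\ on $\set{\eta_u = \infty}$. Consequently
\[
\qprobu[\eta_u = \infty] = \expecp\bra{L_{\eta_u}\indic_{\set{\eta_u = \infty}}} = \expecp\bra{L_\infty \indic_{\set{\eta_u = \infty}}} = 0 ,
\]
which is $\qprobu[\eta_u < \infty] = 1$.

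I do not anticipate a genuine obstacle here; the only points requiring care are the interpretation of $L$ and $K$ at the time point $+\infty$ (legitimate because of supermartingale convergence and monotonicity, respectively) and the fact that the identity $Z = L(1 - K)$ is available up to and including $t = \infty$ --- both of which are already in hand from the construction preceding the lemma.
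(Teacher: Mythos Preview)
Your proof is correct and follows essentially the same route as the paper: both arguments compute $\qprobu[L_{\eta_u}=0]$ directly and then, for the second assertion, combine $Z_\infty = L_\infty(1-K_\infty) = 0$ with $\set{\eta_u=\infty}\subseteq\set{K_\infty<1}$ to conclude that $L_{\eta_u}=L_\infty=0$ $\prob$-a.s.\ on $\set{\eta_u=\infty}$. Your write-up is slightly more explicit about the supermartingale convergence of $L$ and the fact that $\expecp[L_{\eta_u}]=1$, but these are exactly the ingredients the paper is using as well.
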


\begin{pf}
Fix $u \in[0, 1)$. Then ${\mathbb{Q}_u}[L_{\eta_u} > 0] = \expec
_\prob[L_{\eta_u}
\indic_{\{L_{\eta_u} > 0\}}] = \expec_\prob[L_{\eta_u}] = 1$. In
order to
show the equality ${\mathbb{Q}_u}[\eta_u < \infty] = 1$, first observe
that since $0 = Z_\infty= L_\infty(1 - K_\infty)$ holds $\prob$-a.s.,
we have $\prob[K_\infty< 1, L_\infty> 0] = 0$. Coupled with
the fact that $\{\eta_u = \infty\} \subseteq\{K_\infty< 1\}$, we
obtain $\prob[L_{\eta_u} \indic_{\{\eta_u < \infty\}}= L_{\eta
_u}] = 1$. Therefore, ${\mathbb{Q}_u}[\eta_u < \infty] = \expec
_\prob[L_{\eta_u} \indic_{\{\eta_u < \infty\}}] = \expec_\prob
[L_{\eta_u} ]
= 1$.
\end{pf}

\subsection{Time changes}
For a nonnegative $(\Omega, \bF)$-optional process $V$, the
change-of-variables formula gives $\int_{{\Real_+}} V_t \,\ud K_t =
\int_{[0, 1)}
V_{\eta_u} \indic_{\{\eta_u < \infty\}} \,\ud K_{\eta_u}$. For $a
\in
[0, 1)$, on the event $\{K_{\eta_a-} < K_{\eta_a}\}$ it holds that
\[
V_{\eta_a} \Delta K_{\eta_a} = V_{\eta_a}
(K_{\eta_a} - K_{\eta_a
- } ) = \int_{K_{\eta_a - }}^{K_{\eta_a}}
V_{\eta_a} \,\ud u = \int_{K_{\eta_a
- }}^{K_{\eta_a}}
V_{\eta_u} \,\ud u.
\]
Therefore, $\int_{{\Real_+}} V_t \,\ud K_t = \int_{[0, 1)} V_{\eta_u}
\indic
_{\{\eta_u < \infty\}} \,\ud u$ follows. The last fact helps to
establish the
following result.

\begin{prop} \label{prop: expect wrt Q}
Let $\rho$ be a random time on $(\Omega, \F, \bF, \prob)$. Then, 
for any nonnegative $(\Omega, \bF)$-optional process $V$, it holds that
%
\begin{equation}
\label{eq: change of variables} \expec_\prob[V_\rho] = \int
_{[0, 1)} \expec_{\mathbb
{Q}_u}[V_{\eta_u} ] \,\ud u = \lim
_{a \uparrow1} \expec_{\mathbb{Q}_a} \biggl[\int_{[0, a]}
V_{\eta_u} \,\ud u \biggr].
\end{equation}
\end{prop}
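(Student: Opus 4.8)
The plan is to combine the change-of-variables identity established immediately before the statement with the definition of the probabilities $\qprobu$ and the monotone convergence theorem. The key chain of identities already in hand is
\[
\expecp[V_\rho] = \expecp\bra{\int_{\zi} V_t\,L_t\,\ud K_t} = \expecp\bra{\int_{\zo} V_{\eta_u}\,L_{\eta_u}\,\indic_{\set{\eta_u<\infty}}\,\ud u},
\]
where the first equality is property (3) of Theorem \ref{thm: doleans sharpened} and the second is the time-change formula $\int_{\zi} W_t\,\ud K_t = \int_{\zo} W_{\eta_u}\,\indic_{\set{\eta_u<\infty}}\,\ud u$ just derived, applied to the nonnegative optional process $W = VL$ (here one uses that $L_{\eta_u}$ is the value at the stopping time $\eta_u$, matching the integrand $(VL)_{\eta_u}$). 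First I would justify interchanging $\expecp$ with $\int_{\zo}\cdot\,\ud u$ by Tonelli's theorem, which is legitimate since the integrand is nonnegative and jointly measurable in $(\omega,u)$ — the joint measurability following from the fact that $(u,\omega)\mapsto \eta_u(\omega)$ is measurable (the family $(\eta_u)$ is nondecreasing and right-continuous in $u$) and $V$, $L$ are optional. This yields
\[
\expecp[V_\rho] = \int_{\zo} \expecp\bra{V_{\eta_u}\,L_{\eta_u}\,\indic_{\set{\eta_u<\infty}}}\,\ud u.
\]

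Next I would rewrite the inner expectation in terms of $\qprobu$. By definition $\ud\qprobu = L_{\eta_u}\,\ud\prob$, so for any nonnegative $\F$-measurable random variable $\Xi$ we have $\expecqu[\Xi] = \expecp[\Xi\,L_{\eta_u}]$. Applying this with $\Xi = V_{\eta_u}\,\indic_{\set{\eta_u<\infty}}$ gives $\expecp[V_{\eta_u}\,L_{\eta_u}\,\indic_{\set{\eta_u<\infty}}] = \expecqu[V_{\eta_u}\,\indic_{\set{\eta_u<\infty}}]$. Now invoke Lemma \ref{lem: K under Q}: $\qprobu[\eta_u<\infty]=1$, so the indicator may be dropped, leaving $\expecqu[V_{\eta_u}]$. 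This establishes the first equality in \eqref{eq: change of variables}:
\[
\expecp[V_\rho] = \int_{\zo}\expecqu[V_{\eta_u}]\,\ud u.
\]

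For the second equality, I would use the consistency property of the family $(\qprobu)_{u\in\zo}$, namely $\qprob_a = \qprob_u$ on $\F_{\eta_a}$ whenever $a \le u < 1$. Since for $u \le a$ the stopping time $\eta_u$ satisfies $\eta_u \le \eta_a$, the random variable $V_{\eta_u}$ is $\F_{\eta_a}$-measurable (as $V$ is optional, hence $V_{\eta_u}$ is $\F_{\eta_u}\subseteq\F_{\eta_a}$-measurable), so $\expecqu[V_{\eta_u}] = \expecqa[V_{\eta_u}]$ for all $u\in[0,a]$. Therefore
\[
\int_0^a \expecqu[V_{\eta_u}]\,\ud u = \int_0^a \expecqa[V_{\eta_u}]\,\ud u = \expecqa\bra{\int_0^a V_{\eta_u}\,\ud u},
\]
the last step again by Tonelli under $\qproba$. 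Letting $a\uparrow 1$ and applying the monotone convergence theorem to the left-hand side (the integrand $\expecqu[V_{\eta_u}]$ being nonnegative and the interval of integration increasing to $\zo$) yields $\lim_{a\uparrow1}\expecqa\bra{\int_{\za}V_{\eta_u}\,\ud u} = \int_{\zo}\expecqu[V_{\eta_u}]\,\ud u = \expecp[V_\rho]$, completing the proof.

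The step I expect to require the most care is the joint-measurability/Tonelli justification for interchanging expectation and the Lebesgue integral over $u$: one must be careful that $(u,\omega)\mapsto V_{\eta_u(\omega)}(\omega)$ and $(u,\omega)\mapsto L_{\eta_u(\omega)}(\omega)$ are $\mathcal{B}([0,1))\otimes\F$-measurable despite the non-completeness of the filtration. This follows from monotonicity and right-continuity of $u\mapsto\eta_u$ together with optionality of $V$ and $L$, but it is the one place where the slightly non-conventional framework of the paper (no completed filtrations) needs a word of attention; everything else is a bookkeeping consequence of the preceding lemmas.
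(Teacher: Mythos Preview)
Your proof is correct and follows essentially the same route as the paper: apply the pathwise change-of-variables identity to $VL$, use Fubini/Tonelli together with the definition of $\qprobu$ and Lemma~\ref{lem: K under Q} for the first equality, and then invoke the consistency of $(\qprobu)_{u\in\zo}$ plus monotone convergence for the second. Your write-up is more detailed than the paper's (in particular the measurability discussion and the explicit $\F_{\eta_u}$-measurability of $V_{\eta_u}$), but the argument is the same.
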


\begin{pf}
As discussed above, for any $V$ that is nonnegative and $(\Omega, \bF)
$-optional, we have
\[
\int_{{\Real_+}} V_t L_t \,\ud
K_t = \int_{[0, 1)} V_{\eta_u}
L_{\eta_u} \indic _{\{\eta_u < \infty\}} \,\ud u.
\]
Therefore, the first equality in \eqref{eq: change of variables} is
immediate from Theorem~\ref{thmm: doleans sharpened}, Fubini's theorem,
the definition of the probabilities $(\qprob_u)_{u \in[0, 1)}$ and Lemma~\ref{lem: K under Q}. The second equality in \eqref{eq: change of
variables} follows from the monotone convergence theorem and the
consistency of the family $(\qprob_u)_{u \in[0, 1)}$.
\end{pf}

Proposition~\ref{prop: expect wrt Q} has a simple corollary, which
states that the law of $K_{\rho-}$ under $\prob$ is stochastically
dominated (in first order) by the standard uniform law, and that the
latter standard uniform law is stochastically dominated by the law of
$K_{\rho}$ under~$\prob$.

\begin{prop} \label{prop: unif domination}
Let $\rho$ be any random time on $(\Omega, \F, \bF, \prob)$ with
associated pair $(K,
L)$. Then, for all nondecreasing functions $f\dvtx[0, 1)\mapsto\Real
$, it
holds that
%
\begin{equation}
\label{eq: stoch dom for K_rho} \expec_\prob \bigl[f(K_{\rho-}) \bigr] \leq\int
_{[0, 1)} f(u) \,\ud u \leq \expec_\prob
\bigl[f(K_{\rho}) \bigr].
\end{equation}
\end{prop}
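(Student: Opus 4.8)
The plan is to derive \eqref{eq: stoch dom for K_rho} directly from Proposition \ref{prop: expect wrt Q} by choosing the optional process $V$ appropriately. Since $f$ is nondecreasing on $\zo$, it is Borel measurable and, being monotone, it is bounded on any $[0,a]$ with $a < 1$; we may first assume $f \geq 0$ (otherwise split $f = f^+ - f^-$ or add a constant, noting that constants clearly satisfy \eqref{eq: stoch dom for K_rho} with equality — though monotone convergence handling of possibly-unbounded $f$ near $1$ will need a word). For the right-hand inequality, set $V_t \dfn f(K_t)$, which is $\bF$-adapted and right-continuous, hence $\basis$-optional and nonnegative. Then $V_\rho = f(K_\rho)$, so $\expecp[f(K_\rho)] = \int_{\zo} \expecqu[f(K_{\eta_u})]\,\ud u$ by \eqref{eq: change of variables}. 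The key pointwise observation is that $K_{\eta_u} \geq u$ on $\set{\eta_u < \infty}$ by right-continuity of $K$ and the definition of $\eta_u$ as a debut; since $\qprobu[\eta_u < \infty] = 1$ by Lemma \ref{lem: K under Q}, monotonicity of $f$ gives $\expecqu[f(K_{\eta_u})] \geq f(u)$, $\qprobu$-a.s.-wise, and integrating in $u$ yields $\expecp[f(K_\rho)] \geq \int_{\zo} f(u)\,\ud u$.

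For the left-hand inequality, I would instead take $V_t \dfn f(K_{t-})$. This process is $\bF$-adapted and left-continuous, hence predictable, and in particular $\basis$-optional and nonnegative, so Proposition \ref{prop: expect wrt Q} applies: $\expecp[f(K_{\rho-})] = \int_{\zo} \expecqu[f(K_{\eta_u -})]\,\ud u$. Here the relevant pointwise bound is $K_{\eta_u -} \leq u$, which holds on $\set{\eta_u > 0}$ — and indeed everywhere with the convention $K_{0-} = 0 \le u$ — because $\set{t : K_t \ge u}$ first reached at $\eta_u$ means $K_s < u$ for $s < \eta_u$, whence the left-limit at $\eta_u$ is $\leq u$. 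Monotonicity of $f$ then gives $f(K_{\eta_u -}) \leq f(u)$, and integrating over $u \in \zo$ produces $\expecp[f(K_{\rho-})] \leq \int_{\zo} f(u)\,\ud u$.

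The only genuine care needed is the reduction to nonnegative $V$ when $f$ is merely nondecreasing and possibly unbounded or negative: I would write $f = f(0) + (f - f(0))$, handle the constant separately (both inequalities are equalities for constants, since $\qprobu$ is a probability and $\int_{\zo} \ud u = 1$), and apply the above to the nonnegative nondecreasing function $f - f(0)$, possibly truncating by $f \wedge n$ and letting $n \to \infty$ via monotone convergence on both sides to accommodate $\int_{\zo} f(u)\,\ud u = +\infty$. This is routine and I would state it in a sentence rather than belabor it. The main (very mild) obstacle is simply making sure the two candidate processes $f(K_\cdot)$ and $f(K_{\cdot-})$ are legitimately optional — which is immediate from right-continuity and predictability respectively — and correctly pairing each with its pointwise inequality ($K_{\eta_u}\ge u$ versus $K_{\eta_u-}\le u$); everything else is a direct application of \eqref{eq: change of variables} and Lemma \ref{lem: K under Q}.
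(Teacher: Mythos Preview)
Your proposal is correct and follows essentially the same route as the paper: apply \eqref{eq: change of variables} with $V = f(K_-)$ and $V = f(K)$ respectively, and use the pointwise bounds $K_{\eta_u-} \leq u$ and (via Lemma \ref{lem: K under Q}) $K_{\eta_u} \geq u$ $\qprobu$-a.s. The paper dispatches the reduction to nonnegative $f$ in a single clause, while you spell out the optionality of the two candidate processes and the handling of general $f$ more carefully, but the argument is the same.
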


\begin{pf}
Pick any nondecreasing function $f \dvtx[0, 1)\mapsto\Real$. For
establishing the inequalities \eqref{eq: stoch dom for K_rho}, it is
clearly sufficient to deal with the case where $f(u) \in\Real_+$ for
$u \in[0, 1)$. Since $K_{\eta_u - } \leq u$ and $f$ is nondecreasing,
\eqref{eq: change of variables} gives
\[
\expec_\prob \bigl[f(K_{\rho-}) \bigr] = \int
_{[0, 1)} \expec_{\mathbb
{Q}_u} \bigl[f (K_{\eta_u - })
\bigr] \,\ud u \leq\int_{[0, 1)} \expec_{\mathbb{Q}_u} \bigl[f (u)
\bigr] \,\ud u = \int_{[0, 1)} f (u) \,\ud u.
\]
The other inequality in \eqref{eq: stoch dom for K_rho} is proved
similarly, using the fact that ${\mathbb{Q}_u}[K_{\eta_u} \geq u] = 1$
for $u \in[0, 1)$, as follows from Lemma~\ref{lem: K under Q}.
\end{pf}

\subsection{Random times that avoid all stopping times} \label{subsec:
avoid_stop}

A random time $\rho$ on $(\Omega, \F, \bF, \prob)$ is said to
\textsl{avoid all
stopping times on $(\Omega, \bF, \prob)$} if $\prob[\rho= \tau] =
0$ holds
whenever $\tau$ is a stopping time on $(\Omega, \bF)$. The next
result states
equivalent conditions to $\rho$ avoiding all stopping times.

\begin{prop} \label{prop: equiv for rand time avoiding all stop times}
Let $\rho$ be any random time on $(\Omega, \F, \bF, \prob)$ with
associated canonical
pair $(K, L)$. Then the following statements are equivalent:
\begin{longlist}[(1)]
\item[(1)]$\rho$ avoids all stopping times on $(\Omega, \bF, \prob)$.
\item[(2)]$\Delta K$ is $\prob$-evanescent.
\item[(3)]$\prob[\Delta K_\rho= 0] = 1$.
\item[(4)]$K_\rho$ has the standard uniform distribution under $\prob$.
\end{longlist}
\end{prop}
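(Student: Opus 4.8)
The plan is to establish the cycle of implications $(1)\Rightarrow(2)\Rightarrow(3)\Rightarrow(4)\Rightarrow(1)$, drawing on the explicit formula \eqref{eq: real defn of K} for $K$, the characterization $\Delta A_\tau = \prob[\rho = \tau \such \F_\tau]$ of jumps of the dual optional projection, and Proposition \ref{prop: expect wrt Q} (together with its corollary Proposition \ref{prop: unif domination}) which links $K_\rho$ to the uniform law.

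\textbf{From (1) to (2).} First I would recall from \eqref{eq: defn of K} that $\ud K_t/(1-K_{t-}) = \ud A_t/(Z_t+\Delta A_t)$ on $\dbraco{0,\zeta_0}$, so that $\Delta K_t \ne 0$ precisely where $\Delta A_t \ne 0$ (with the boundary behaviour at $\zeta_0$ handled as in the discussion following \eqref{eq: real defn of K}); thus $\Delta K$ is $\prob$-evanescent if and only if $\Delta A$ is. Now if $\rho$ avoids all stopping times, then for every stopping time $\tau$ we have $\Delta A_\tau = \prob[\rho = \tau \such \F_\tau] = 0$ $\prob$-a.s.\ on $\set{\tau < \infty}$, hence by the section theorem (\cite[Theorem 4.10]{MR1219534}, exactly as used in the proof of Lemma \ref{lem: loc bdd of L}) the optional set $\set{\Delta A \ne 0}$ is $\prob$-evanescent, and therefore so is $\set{\Delta K \ne 0}$.

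\textbf{From (2) to (3), and (3) to (4).} The implication $(2)\Rightarrow(3)$ is immediate: if $\Delta K$ is $\prob$-evanescent then in particular $\Delta K_\rho = 0$ $\prob$-a.s. For $(3)\Rightarrow(4)$, I would apply Proposition \ref{prop: unif domination}: for any nondecreasing $f$ we always have $\expecp[f(K_{\rho-})] \le \int_{\zo} f(u)\,\ud u \le \expecp[f(K_\rho)]$, while the hypothesis $\prob[\Delta K_\rho = 0]=1$ gives $K_{\rho-} = K_\rho$ $\prob$-a.s., forcing $\expecp[f(K_\rho)] = \int_{\zo} f(u)\,\ud u$ for every bounded nondecreasing $f$; since such functions determine the law on $\zo$, $K_\rho$ is standard uniform.

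\textbf{From (4) to (1).} This is the step I expect to be the main obstacle, since it must recover a pathwise/evanescence statement from a one-dimensional distributional identity. The idea is to use Proposition \ref{prop: expect wrt Q} in the sharper form it provides: taking $V_t = f(K_t)$ and noting $K_{\eta_u-}\le u \le K_{\eta_u}$ (the latter on $\set{\eta_u<\infty}$, which has full $\qprobu$-measure by Lemma \ref{lem: K under Q}), the chain of inequalities in the proof of Proposition \ref{prop: unif domination} collapses to equalities under (4); this forces $\qprobu[K_{\eta_u-} = u] = 1$ for (Lebesgue-)a.e.\ $u\in\zo$, i.e.\ $K$ $\prob$-a.s.\ has no jump across level $u$ for a.e.\ $u$. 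Integrating, $\expecp\bra{\sum_{t} \indic_{\set{\Delta K_t > 0}} \Delta K_t} = \int_{\zo}\prob[K_{\eta_u-} < u]\,\ud u = 0$, so $\Delta K$ is $\prob$-evanescent — equivalently (via \eqref{eq: defn of K}) $\Delta A$ is. Finally, for any stopping time $\tau$, $\prob[\rho = \tau] = \expecp[\Delta A_\tau \indic_{\set{\tau<\infty}}] = 0$ by property (3) of Theorem \ref{thm: doleans sharpened} applied to $V = \indic_{\dbra{\tau}}$, so $\rho$ avoids all stopping times. The delicate point to get right is the passage from ``equality of expectations for all nondecreasing $f$'' to the pointwise-in-$u$ statement about $K_{\eta_u-}$, which I would handle by a monotone-class / monotonicity argument on the family $u\mapsto \qprobu[K_{\eta_u-}\le v]$ rather than by a naive Fubini exchange.
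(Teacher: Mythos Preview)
Your implications $(1)\Rightarrow(2)\Rightarrow(3)\Rightarrow(4)$ are fine and match the paper. The problem is your $(4)\Rightarrow(1)$, which contains two real gaps.

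First, assumption (4) does \emph{not} make the full chain in Proposition~\ref{prop: unif domination} collapse. It tells you that $\expecp[f(K_\rho)]=\int_0^1 f$, so the right inequality is an equality; but it says nothing about $K_{\rho-}$, so the left inequality $\expecp[f(K_{\rho-})]\le\int_0^1 f$ need not be tight. From the right-hand collapse and Proposition~\ref{prop: expect wrt Q} you can legitimately extract $\qprobu[K_{\eta_u}=u]=1$ for a.e.\ $u$ (take $f(x)=x$), but this is a statement about $K_{\eta_u}$, not $K_{\eta_u-}$; it only rules out $K$ jumping \emph{over} level $u$ under $\qprobu$, not jumping \emph{to} it. Your claimed conclusion $\qprobu[K_{\eta_u-}=u]=1$ is not what the argument yields.

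Second, even if you had $\qprobu[K_{\eta_u-}=u]=1$, your next line ``$\int_0^1 \prob[K_{\eta_u-}<u]\,\ud u=0$'' does not follow: $\qprobu[A]=0$ only gives $A\subseteq\{L_{\eta_u}=0\}$ $\prob$-a.s., and $\{L_{\eta_u}=0\}$ can have positive $\prob$-mass. To pass from $\qprobu$-null to $\prob$-null on the jump set one needs precisely the inclusion $\{\Delta K_\tau>0\}\subseteq\{L_\tau>0\}$ modulo $\prob$, which you never establish. (There is a further issue with your identity $\expecp[\sum_t\Delta K_t]=\int_0^1\prob[K_{\eta_u-}<u]\,\ud u$: on $\{K_\infty<1\}$, which has positive $\prob$-probability in general, the right-hand side picks up an extra $1-K_\infty$ term.)

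The paper closes the cycle differently: it proves $(4)\Rightarrow(3)$ by computing $\expecp[K_\rho+K_{\rho-}]=1$ directly via Proposition~\ref{prop: expect wrt Q}, using the pathwise bounds $a^2\le\int_0^a(K_{\eta_u}+K_{\eta_u-})\,\ud u\le 1$ on $\{K_{\eta_a}\ge a\}$ and letting $a\uparrow 1$; combined with $\expecp[K_\rho]=1/2$ from (4) this gives $\expecp[\Delta K_\rho]=0$. Then $(3)\Rightarrow(2)$ is handled by first proving the key inclusion $\{\Delta K_\tau>0\}\subseteq\{L_\tau>0\}$ (via $L_\tau=(Z_\tau+\Delta A_\tau)/(1-K_{\tau-})$ and $\{\Delta K_\tau>0\}\subseteq\{\Delta A_\tau>0\}$) and then writing $0=\expecp[\Delta K_\rho]=\expecp\big[\sum_t L_t(\Delta K_t)^2\big]$. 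The ``delicate point'' is thus not the monotone-class step you flagged, but rather recovering a $\prob$-statement from a $\qprobu$-statement, and the $\{L>0\}$ inclusion is the missing ingredient.
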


\begin{pf}
In the course of the proof, we shall be using $A$, $Z$ and $N$ for the
processes that were introduced in Section~\ref{subsec: canon pair},
associated to the random time $\rho$ on $(\Omega, \F, \bF, \prob)$.

For implication $(1) \Rightarrow(2)$, the fact that $\expec_\prob
[\Delta
A_\tau] = \prob[\rho= \tau] = 0$ implies that $\prob[\Delta A_\tau=
0] = 1$ holds for all stopping times $\tau$ on $(\Omega, \bF)$.
Then, in view
of~\eqref{eq: defn of K}, $\prob[\Delta K_\tau= 0] = 1$ holds for all
stopping times $\tau$ on $(\Omega, \bF)$ as well. An application
of~\cite{MR1219534}, Theorem~4.10, shows that $\Delta K$ is $\prob$-evanescent.
Implication $(2) \Rightarrow(3)$ is trivial. Now, assume (3); from the
inequalities \eqref{eq: stoch dom for K_rho} we get $\expec[f(K_\rho)]
= \int_{[0, 1)} f(u) \,\ud u$ for any nondecreasing Borel function $f
\dvtx[0, 1)
\mapsto\Real_+$, which implies that $K_\rho$ has a standard uniform
distribution under $\prob$. In the next three paragraphs, we shall show
$(4) \Rightarrow(3) \Rightarrow(2) \Rightarrow(1)$.

We show $(4) \Rightarrow(3)$. By \eqref{eq: change of variables}, we have
\begin{eqnarray*}
\expec_\prob[K_{\rho} + K_{\rho-}] = \lim
_{a \uparrow1} \expec _{\mathbb{Q}_a} \biggl[\int_{[0, a]}
(K_{\eta_u} + K_{\eta_u -}) \,\ud u \biggr].
\end{eqnarray*}
For $a \in[0, 1)$, on the event $\{K_{\eta_a} \geq a\}$ it holds that
\[
a^2 = \int_{[0, a]} 2 u \,\ud u \leq\int
_{[0, a]} (K_{\eta_u} + K_{\eta_u -}) \,\ud u \leq1.
\]
With the help of Lemma~\ref{lem: K under Q}, we obtain $\expec_\prob
[K_{\rho} + K_{\rho-}] = 1$. Since $\expec_\prob[K_{\rho}] = 1/2$ holds
in view of the fact that $K_\rho$ has the standard uniform distribution
under $\prob$, we obtain $\expec[K_{\rho-}] = 1/2$. As $K$ is
nondecreasing and $\expec_\prob[\Delta K_\rho] = 0$, we obtain
$\prob
[\Delta K_{\rho} = 0] = 1$, that is, statement $(3)$.

For $(3) \Rightarrow(2)$, start with the following observation: for
any stopping time $\tau$, on $\{\tau< \infty\}$ it holds that
\begin{eqnarray*}
L_\tau&= &L_{\tau-} + \Delta L_\tau= L_{\tau-}
+ \frac{\Delta N_\tau}{1
- K_{\tau-}} \\
&=& \frac{L_{\tau-} (1 - K_{\tau-}) +Z_{\tau} - Z_{\tau
-} +
\Delta A_\tau}{1 - K_{\tau-}} = \frac{Z_{\tau} + \Delta A_\tau}{1 -
K_{\tau-}}.
\end{eqnarray*}
Since $\{\Delta K_\tau> 0\} \subseteq\{\Delta A_\tau> 0\}$ holds
on $\{\tau< \infty\}$, it follows that $\{\Delta K_\tau> 0\}
\subseteq\{L_\tau> 0\}$ modulo $\prob$ holds on $\{\tau< \infty\}
$ for all stopping times $\tau$. Continuing, note that
\begin{eqnarray*}
0 &=& \expec_\prob[\Delta K_{\rho}] = \expec_\prob
\biggl[\int_{{\Real
_+}} (K_{t} - K_{t-})
L_t \,\ud K_t \biggr] \\
&=& \expec_\prob \biggl[\sum
_{t \in{\Real
_+}} L_t (\Delta K_t)^2
\biggr].
\end{eqnarray*}
Consider a sequence $(\tau_n)_{n \in\Natural}$ of stopping times
with disjoint
graphs that exhausts the jumps of $K$; then, $\expec_\prob[\sum_{n
\in\Natural} L_{\tau_n} (\Delta K_{\tau_n})^2] = 0$. This means
that $\sum_{n \in\Natural}
L_{\tau_n} (\Delta K_{\tau_n})^2 = 0$, $\prob$-a.s.; since $\{
\Delta K_{\tau_n} > 0\} \subseteq\{L_{\tau_n} > 0\}$ modulo $\prob$
holds on
$\{\tau_n < \infty\}$ for all $n \in\Natural$, we obtain $\prob
[\Delta K_{\tau_n} = 0] = 1$ for all $n \in\Natural$. The last
implies that $\prob[
\Delta K_\tau= 0] = 1$ for all stopping times $\tau$. In view of
\cite{MR1219534}, Theorem~4.10, this is exactly statement (2).

Finally, we establish $(2) \Rightarrow(1)$. Since
\begin{eqnarray*}
\{\Delta A_\tau> 0\} &=& \{L_\tau\Delta K_\tau> 0\}
= \{L_\tau> 0\} \cap\{\Delta K_\tau> 0\} \\
&=& \{\Delta
K_\tau> 0\}
\end{eqnarray*}
modulo $\prob$ holds for all stopping times $\tau$, we have $\prob
[\rho
= \tau] = \expec_\prob[\Delta A_\tau] = 0$, the latter being valid because
$\prob[\Delta A_\tau> 0] = \prob[\Delta K_\tau> 0] = 0$. Therefore,
$\rho$ avoids all stopping times under $\prob$.
\end{pf}

\subsection{An optimality property of $L$ amongst all nonnegative
local $\mathbb{P}$-mar\-tin\-gales} \label{subsec: num property}

Let $\Sl$ be the set of all nonnegative super-martingales $S$ on
$(\Omega, \bF, \prob)$ with $\prob[S_0 = 1] = 1$. The set $\Sl$
contains in
particular all nonnegative local martingales $M$ on $(\Omega, \bF,
\prob)$ with
$\prob[M_0 = 1] = 1$. For a random time $\rho$ with associated
canonical pair $(K, L)$, it is reasonable to expect that the local
martingale $L$ has some optimality property within the class $\Sl$ when
sampled at $\rho$. Indeed, the next result shows that, in the jargon of
\cite{Kar09}, $L_\rho$ is the num\'eraire under $\prob$ in the
convex set
$\{S_\rho| S \in\Sl\}$.

\begin{prop} \label{prop: num prof of L}
Let $\rho$ be a random time on $(\Omega, \F, \bF, \prob)$ with
associated canonical
pair $(K, L)$. Then $\prob[L_\rho> 0] = 1$ and $\expec_\prob[S_\rho
/ L_\rho] \leq1$ holds for all $S \in\Sl$. If, furthermore, $\rho$
avoids all stopping times on $(\Omega, \bF, \prob)$, then the
stronger inequality
$\expec_\prob[S_\rho/ L_\rho| K_\rho] \leq1$ holds for all $S
\in\Sl$.
\end{prop}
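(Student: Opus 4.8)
The plan is to reduce everything to the change-of-variables identity \eqref{eq: change of variables} of Proposition \ref{prop: expect wrt Q} together with the defining relation $\ud \qprobu = L_{\eta_u} \ud \prob$ of the probabilities $(\qprobu)_{u \in \zo}$: after sampling at $\eta_u$ and passing to $\qprobu$, the density $L_{\eta_u}$ will cancel the factor $1/L_{\eta_u}$, leaving the supermartingale $S$ itself, at which stage ordinary optional sampling finishes the job. First I would dispose of the claim $\prob[L_\rho > 0] = 1$: applying property (3) of Theorem \ref{thm: doleans sharpened} to the nonnegative optional process $V = \indic_{\set{L = 0}}$ gives $\prob[L_\rho = 0] = \expecp[\int_{\zi} \indic_{\set{L_t = 0}} L_t \ud K_t]$, whose integrand is identically $0$; (property (4) of Theorem \ref{thm: doleans sharpened} would serve equally well). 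In particular $S_\rho / L_\rho$ is $\prob$-a.s.\ well defined for every $S \in \Sl$.

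For the inequality $\expecp[S_\rho / L_\rho] \leq 1$, I would apply \eqref{eq: change of variables} to the nonnegative optional process $V_t \dfn (S_t / L_t) \indic_{\set{L_t > 0}}$. By the previous step the left-hand side equals $\expecp[S_\rho / L_\rho]$. On the right-hand side, using $\ud \qprobu = L_{\eta_u} \ud \prob$ and $\qprobu[L_{\eta_u} > 0] = 1$ from Lemma \ref{lem: K under Q}, one obtains $\expecqu[V_{\eta_u}] = \expecp[L_{\eta_u} V_{\eta_u}] = \expecp[S_{\eta_u} \indic_{\set{L_{\eta_u} > 0}}] \leq \expecp[S_{\eta_u}] \leq \expecp[S_0] = 1$, the last inequality being optional sampling for the nonnegative supermartingale $S$ at the (possibly infinite) stopping time $\eta_u$, with the convention $S_\infty \dfn \limt S_t$. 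Integrating over $u \in \zo$ then yields $\expecp[S_\rho / L_\rho] \leq 1$.

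For the conditional refinement under the assumption that $\rho$ avoids all stopping times on $\basisp$, I would repeat the computation with $V_t \dfn (S_t / L_t) \indic_{\set{L_t > 0}}\, g(K_t)$ for an arbitrary bounded Borel $g : \zo \to \Real_+$. By Proposition \ref{prop: equiv for rand time avoiding all stop times}, $\Delta K$ is $\prob$-evanescent (so $K$ is $\prob$-a.s.\ continuous) and $K_\rho$ is standard uniform under $\prob$. On $\set{L_{\eta_u} > 0}$ one has $\eta_u < \infty$ modulo $\prob$ (as in the proof of Lemma \ref{lem: K under Q}, since $\set{\eta_u = \infty} \subseteq \set{K_\infty < 1}$ and $L_\infty (1 - K_\infty) = Z_\infty = 0$ $\prob$-a.s.), and then the sandwich $K_{\eta_u -} \leq u \leq K_{\eta_u}$ together with continuity of $K$ forces $K_{\eta_u} = u$ $\prob$-a.s.\ there; hence $\expecqu[V_{\eta_u}] = g(u)\, \expecp[S_{\eta_u} \indic_{\set{L_{\eta_u} > 0}}] \leq g(u)$. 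Therefore \eqref{eq: change of variables} gives $\expecp[(S_\rho / L_\rho)\, g(K_\rho)] \leq \int_{\zo} g(u)\, \ud u = \expecp[g(K_\rho)]$, the last equality by the uniform law of $K_\rho$; letting $g$ range over indicators of Borel subsets of $\zo$ then yields $\expecp[S_\rho / L_\rho \such K_\rho] \leq 1$.

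I do not expect a serious obstacle: the argument is a change of variables followed by supermartingale supersampling. The points needing care are purely technical bookkeeping ones — ensuring $V$ may be taken genuinely optional (via \cadlag\ modifications of $S$ and $L$, so that property (3) of Theorem \ref{thm: doleans sharpened} and \eqref{eq: change of variables} apply), handling the event $\set{\eta_u = \infty}$ on which $L_{\eta_u} = 0$ $\prob$-a.s.\ so that every manipulation of the ratio $S_{\eta_u}/L_{\eta_u}$ is legitimate, and invoking optional sampling for nonnegative supermartingales at possibly infinite stopping times.
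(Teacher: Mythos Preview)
Your proposal is correct and follows essentially the same route as the paper: the change-of-variables identity \eqref{eq: change of variables}, the cancellation $\expecqu[S_{\eta_u}/L_{\eta_u}] = \expecp[S_{\eta_u}\indic_{\set{L_{\eta_u}>0}}] \leq 1$, and in the avoiding case the fact that $K_{\eta_u} = u$ holds $\qprobu$-a.s.\ so one may test against arbitrary nonnegative $g(K_\rho)$. The only (harmless) deviation is your first step, where you obtain $\prob[L_\rho = 0] = 0$ directly from property (3) of Theorem \ref{thm: doleans sharpened} with $V = \indic_{\set{L = 0}}$, whereas the paper deduces it from $\qprobu[L_{\eta_u} > 0] = 1$ and \eqref{eq: change of variables}; your argument is marginally more elementary but otherwise the proofs coincide.
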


\begin{pf}
By Lemma~\ref{lem: K under Q}, ${\mathbb{Q}_u}[L_{\eta_u} > 0] = 1$ holds
for all $u \in[0, 1)$. Then, by Proposition~\ref{prop: expect wrt Q},
\[
\prob[L_\rho> 0] = \int_{[0, 1)} {
\mathbb{Q}_u}[L_{\eta_u} > 0] \,\ud u = 1.
\]

Fix $S \in\Sl$. Observe that $\expec_{\mathbb{Q}_u}[S_{\eta_u} /
L_{\eta_u}] =
\expec_\prob[S_{\eta_u} \indic_{\{L_{\eta_u} > 0\}}] \leq1$ holds for
all $u \in[0, 1)$. Then
\[
\expec_\prob[S_\rho/ L_\rho] = \int
_{[0, 1)} \expec_{\mathbb
{Q}_u}[S_{\eta_u} /
L_{\eta_u}] \,\ud u \leq1.
\]
Assume now that $\rho$ avoids all stopping times on $(\Omega, \bF,
\prob)$. By a
straightforward extension of Lemma~\ref{lem: K under Q}, ${\mathbb
{Q}_u}[K_{\eta_u} = u] = 1$ holds for all $u \in[0, 1)$. Therefore,
for all
functions $f \dvtx[0, 1)\mapsto\Real_+$,
\begin{eqnarray*}
\expec_\prob \bigl[(S_\rho/ L_\rho)
f(K_\rho) \bigr] &=& \int_{[0, 1)}
\expec_{\mathbb{Q}_u} \bigl[(S_{\eta_u} / L_{\eta_u})
f(K_{\eta_u}) \bigr] \,\ud u
\\
&=& \int_{[0, 1)} \expec_{\mathbb{Q}_u} \bigl[(S_{\eta_u}
/ L_{\eta_u}) f(u) \bigr] \,\ud u
\\
&\leq&\int_{[0, 1)} f(u) \,\ud u = \expec_\prob
\bigl[f(K_\rho) \bigr],
\end{eqnarray*}
the last equality following from Proposition~\ref{prop: equiv for rand
time avoiding all stop times}. Since the function $f \dvtx[0,
1)\mapsto
\Real
_+$ is arbitrary, we obtain $\expec_\prob[S_\rho/ L_\rho| K_\rho]
\leq1$.
\end{pf}

\section{Random times and randomised stopping times} \label{sec: equal
in law}

\subsection{The one probability $\qprob$}

Recall the consistent family of probabilities $(\qprob_u)_{u \in[0, 1)}$
from Section~\ref{subsec: consist fam prob assoc with rand}. For the
purposes of Section~\ref{sec: equal in law}, we shall be working under
the following assumption.

\begin{ass} \label{ass: existence of prob}
There exists a probability measure $\qprob\equiv\qprob_1$ on
$(\Omega, \F)$, as well as a random variable $U \dvtx\Omega\mapsto[0, 1)$,
such that:
\begin{longlist}[(1)]
\item[(1)]$\qprob|_{\F_{\eta_u}} = \qprob_u|_{\F_{\eta_u}}$ holds for all
$u \in[0, 1)$.
\item[(2)] Under $\qprob$, $U$ is independent of $\F_{\infty}$ and has the
standard uniform law.
\end{longlist}
\end{ass}

\begin{rem} \label{rem: no loss of gen the unif}
Given that there exists a probability measure $\qprob\equiv\qprob_1$
on $(\Omega, \F)$ such that $\qprob|_{\F_{\eta_u}} = \qprob
_u|_{\F
_{\eta_u}}$ holds for all $u \in[0, 1)$, asking that there also
exists a
random variable $U \dvtx\Omega\mapsto[0, 1)$ such that $U$ is independent
of $\F_{\infty}$ and has the standard uniform law under $\qprob$
entails no loss of generality whatsoever. Indeed, if such random
variable does not exist, the underlying probability space can always be
enlarged in order to support one. More precisely, define $\oOmega:=
\Omega\times[0, 1)$, a filtration $\overline{\bF}= (\oF_t)_{t \in
\Real_+}$ via
$\oF_t = \F_t \otimes\{\varnothing, [0, 1)\}$ for $t \in\Real_+$, as
well as $\oF= \F\otimes\B([0, 1))$, where $\B([0, 1))$ is the Borel
sigma-algebra on $[0, 1)$. It is immediate that $(\F_t)_{t \in\Real_+}$
and $(\oF_t)_{t \in\Real_+}$ are in one-to-one correspondence.
(However, $\F$ and $\oF$ are not isomorphic.) On $(\oOmega, \oF)$,
define $\oprob:= \prob\otimes\Leb$, $\overline{\qprob}:= \qprob
\otimes
\Leb$, as well as $\overline{\qprob}_u:= \qprob_u \otimes\Leb$
for $u \in
[0, 1)
$, where ``$\Leb$'' denotes Lebesgue measure on $\B([0, 1))$. Any process
$X$ on the original stochastic basis is identified on the new
stochastic basis with the process $\oX$ defined via $\oX(\omega, u) =
X(\omega)$ for all $(\omega, u) \in\oOmega$---this way, properties
like adaptedness and optionality of processes are in one-to-one
correspondence. The random variable $U \dvtx\oOmega\mapsto[0, 1)$ defined
via $U(\omega, u) = u$ for all $(\omega, u) \in\oOmega$ has the
standard uniform distribution, and is independent of $\oF_{\infty}$,
the previous holding under both $\oprob$ and $\overline{\qprob}$.
Note that the
pair associated with $\rho$ on $(\oOmega, \oF, \overline{\bF},
\oprob)$ is $(\oK, \oL)$ in the
previously-introduced notation, which is identified with $(K, L)$.
Furthermore, $\overline{\qprob}|_{\oF_{\eta_u}} = \overline{\qprob
}_u|_{\oF_{\eta_u}}$
holds for all $u \in[0, 1)$.
\end{rem}

\begin{rem} \label{rem: prob_not_always_there}
Even though item (2) of Assumption~\ref{ass: existence of prob} is not
really an assumption in view of Remark~\ref{rem: no loss of gen the
unif} above, item (1) \emph{is}, as Example~\ref{exa:
need_for_no_compl} will reveal. In fact, Example~\ref{exa:
need_for_no_compl} will make an additional point: even if $\qprob$
exists, it is in general possible that neither of the conditions
$\qprob\ll_{\F_t} \prob$ nor $\prob\ll_{\F_t} \qprob$ holds,
for any
choice of $t \in(0, \infty)$. This clarifies the absolute need to
refrain from completing $\bF= (\F_t)_{t \in\Real_+}$ with $\prob
$-null sets,
even if the null sets come from $\bigcup_{t \in\Real_+} \F_t$ and not
from the larger, in general, sigma-field $\F_\infty= \bigvee_{t \in
\Real_+} \F_t$.
\end{rem}

\subsection{The stochastic behaviour of optional processes up to
random times}

We now turn to the topic discussed in the \hyperref[sec1]{Introduction}: as long
as distributional properties of optional processes on $(\Omega, \bF)$
up to a
random time are concerned, one can pass from the original random time
$\rho$ and probability $\prob$ to a randomised stopping time $\psi$ on
$(\Omega, \bF, \qprob)$, where $\qprob$ is the probability of
Assumption~\ref{ass:
existence of prob}.

\begin{thmm} \label{thmm: law equal rand pseudo}
Let $\rho$ be a random time on $(\Omega, \F, \bF, \prob)$ with
associated canonical
pair $(K, L)$. Under the validity of Assumption~\ref{ass: existence of
prob}, let $\qprob$ the probability that appears there. Define
\[
\psi:= \inf\{t \in{\Real_+}| K_t \geq U\} = \eta_U.
\]
Then $\psi$ is a randomised stopping time on $(\Omega, \F, \bF,
\qprob)$ with
associated canonical pair $(K, 1)$. Furthermore, for any optional
process $Y$ on $(\Omega, \bF)$, the finite-dimensional distributions of
$Y^\rho= (Y_{\rho\wedge t})_{t \in\Real_+}$ under $\prob$ coincide
with the
finite-dimensional distributions of $Y^\psi= (Y_{\psi\wedge t})_{t
\in\Real_+
}$ under $\qprob$.
\end{thmm}

\begin{pf}
Observe that $\{\psi> t\} = \{U > K_t\}$ holds for $t \in{\Real_+}$.
Therefore,
\[
\qprob[\psi> t | \F_t] = \qprob[U > K_t |
\F_t] = 1 - K_t\qquad \mbox{for } t \in{\Real_+}.
\]
It follows that the pair associated with $\psi$ on $(\Omega, \bF,
\qprob)$ is $(K, 1)$.

Pick any nonnegative optional process $V$ on $(\Omega, \bF)$. Then
%
\begin{eqnarray}
\label{eq: P and Q fdd equiv pre} \expec_\prob[V_\rho] &=& \int
_{[0, 1)} \expec_{\mathbb
{Q}_u}[V_{\eta_u} ] \,\ud u = \int
_{[0, 1)} \expec_\qprob[V_{\eta_u} ] \,\ud u
\nonumber
\\[-8pt]
\\[-8pt]
\nonumber
&=&
\expec_\qprob \biggl[\int_{[0, 1)} V_{\eta_u} \,\ud
u \biggr] = \expec_\qprob[V_{\eta_U}] = \expec
_\qprob[V_\psi].
\end{eqnarray}
Continuing, fix an optional process $Y$ on $(\Omega, \bF)$ and times
$\{t_1,\ldots, t_n\} \subseteq\Real_+$. For any nonnegative
Borel-measurable function $f\dvtx\Real^n \mapsto\Real_+$, the
process $V
= f(Y^{t_1},\ldots, Y^{t_n})$ is optional on $(\Omega, \bF)$. Since
$V_\rho=
f(Y^{\rho}_{t_1},\ldots, Y^\rho_{t_n})$ and $V_\psi= f(Y^{\psi
}_{t_1},\ldots, Y^{\psi}_{t_n})$, \eqref{eq: P and Q fdd equiv pre} gives
\[
\expec_\prob \bigl[f \bigl(Y^{\rho}_{t_1},\ldots,
Y^\rho_{t_n} \bigr) \bigr] = \expec _\qprob \bigl[f
\bigl(Y^\psi_{t_1},\ldots, Y^\psi_{t_n}
\bigr) \bigr].
\]
As the collection $\{t_1,\ldots, t_n\} \subseteq\Real_+$ and the
nonnegative Borel-measurable function $f$ are arbitrary, the
finite-dimensional distributions of $Y^\rho$ under $\prob$ coincide
with the finite-dimensional distributions of $Y^\psi$ under $\qprob$.
\end{pf}

\begin{rem} \label{rem: tail_prob}
In the setting of Theorem~\ref{thmm: law equal rand pseudo}, assume
that $\tau$ is a stopping time on $(\Omega, \bF)$ and that $E$ is an
$\F
_\tau
$-measurable set. Then, since the process $\indic_E \indic_{]\!] \tau, \infty[\![}$ is optional, we obtain
\begin{eqnarray*}
\prob[E, \rho> \tau] &=& \qprob[E, \eta_U > \tau] = \qprob[E,
K_\tau< U] \\
&=&\int_{[0, 1)} \qprob[E, K_\tau<
u] \,\ud u = \expec_\qprob \bigl[(1 - K_\tau)
\indic_E \bigr].
\end{eqnarray*}
\end{rem}

\section{First examples} \label{sec: examples}

\subsection{Finite-horizon discrete-time models} \label{subsec: discr time}
Models where the time-set is discrete can be naturally embedded in a
continuous-time framework. \emph{Only for the purposes of Section~\ref
{subsec: discr time}}, we consider a filtered probability space
$(\Omega, \F, \bF, \prob)$ with $\bF= (\F_t)_{t \in\Time}$,
where $\Time= \{0,\ldots, T\}$ for $T \in\Natural$. We assume that
$\F= \F_T \vee
\sigma
(U)$, where $U$ is a random variable with uniform distribution under
$\prob$, independent of $\F_T$. A random time $\rho$ in this setting is
a $\Time$-valued random variable.

It is straightforward to check that $A = \sum_{t \leq\cdot} \prob
[\rho
= t | \F_t]$ is the dual optional projection on $(\Omega, \bF, \prob
)$ of
$\indic
_{[\![ \rho, T ]\!]}$. Recall from Section~\ref{subsec: canon pair} the
stopping time $\zeta_0:= \min\{t \in\Time| Z_t = 0\}$. The
discrete-time versions of \eqref{eq: defn of K} and \eqref{eq: defn of
L} on $\{t \leq\zeta_0\}$ read
\[
K_t = K_{t-1} + (1 - K_{t-1}) \biggl(
\frac{A_t - A_{t-1}}{Z_t + A_t -
A_{t-1}} \biggr) = K_{t-1} + (1 - K_{t-1})
\frac{\prob[\rho= t | \F
_t]}{\prob[\rho\geq t | \F_t]}
\]
and
\[
L_t = L_{t-1} \biggl(1 + \frac{N_t - N_{t-1}}{Z_{t-1}} \biggr) =
L_{t-1} \frac
{Z_t + A_t - A_{t-1}}{Z_{t-1}} = L_{t-1} \frac{\prob[\rho\geq t
|
\F_t]}{\prob[\rho\geq t | \F_{t-1}]}.
\]
On $\{t > \zeta_0\}$, $K_t = K_{\zeta_0}$ and $L_t = L_{\zeta_0}$ holds.

In finite-horizon discrete-time settings like the one considered here,
nonnegative local martingales are actually martingales; see \cite
{MR1809522}. Therefore, one may define a probability $\qprob$ on
$(\Omega, \F)$ that has density $L_T$ with respect to $\prob$; then,
$\qprob|_{\F_{\eta_u}} = {\mathbb{Q}_u}|_{\F_{\eta_u}}$ holds for
all $u
\in
[0, 1)$. The probability $\qprob$ is absolutely continuous with
respect to
$\prob$. (Observe also that Assumption~\ref{ass: existence of prob} is
always valid in this setting. Indeed, $L_T$ is $\F_T$-measurable and,
therefore, independent of $U$ under $\prob$, which implies that $U$ is
independent of $\F_T$ under $\qprob$.) The next result shows that the
stochastic behaviour of $\rho$ under $\prob$ and $\qprob$ might be
radically different.

\begin{prop} \label{prop: discr time rand}
Let $\rho$ be a random time on $(\Omega, \bF, \prob)$. If $\prob
[\rho= \zeta_0
| \F_{\zeta_0}]$ is $\prob$-a.s. $\{0, 1\}$-valued, then
$\qprob
[\rho= \zeta_0] = 1$.
\end{prop}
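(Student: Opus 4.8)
The plan is to locate the event $\{\rho=\zeta_0\}$ inside $\F_{\zeta_0}$ up to $\prob$-null sets, and then to observe that the $\prob$-density $L_T$ of $\qprob$ vanishes on the part of $\Omega$ on which $\rho\neq\zeta_0$. Throughout I work with the fixed \cadlag\ versions of $Z$ and $A$ from Subsection \ref{subsec: canon pair}.

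I would begin by reading off the consequences of the definition of $\zeta_0$. Since the horizon is finite and $\rho$ is $\Time$-valued we have $Z_T=\prob[\rho>T\such\F_T]=0$, so $\zeta_0\le T$ and $Z_{\zeta_0}=0$; because $Z_t=\prob[\rho>t\such\F_t]$ this forces $\rho\le\zeta_0$ $\prob$-a.s.\ and $\prob[\rho=\zeta_0\such\F_{\zeta_0}]=\prob[\rho\ge\zeta_0\such\F_{\zeta_0}]$. Next I would invoke the identity $\Delta A_\tau=\prob[\rho=\tau\such\F_\tau]$, valid $\prob$-a.s.\ on $\{\tau<\infty\}$ for every stopping time $\tau$ (this was already used inside the proof of Lemma \ref{lem: loc bdd of L}), applied to $\tau=\zeta_0$; it shows that the hypothesis is exactly that $\Delta A_{\zeta_0}$ is $\prob$-a.s.\ $\{0,1\}$-valued. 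On $\{\Delta A_{\zeta_0}=1\}\in\F_{\zeta_0}$ we then have $\prob[\rho=\zeta_0\such\F_{\zeta_0}]=1$, hence $\{\Delta A_{\zeta_0}=1\}\subseteq\{\rho=\zeta_0\}$ modulo a $\prob$-null set.

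The crux is to show that $L_{\zeta_0}=0$ on $\{\Delta A_{\zeta_0}=0\}$. On that event $\zeta_0\ge 1$: if $Z_0=0$ then $\rho=0=\zeta_0$ $\prob$-a.s.\ there, so $\Delta A_0=\prob[\rho=0\such\F_0]=1$ and $\{\zeta_0=0\}\cap\{\Delta A_{\zeta_0}=0\}$ is $\prob$-null. Consequently $K_{\zeta_0-}=K_{\zeta_0-1}$, and since $Z_{\zeta_0-1}>0$ the identity $Z=L(1-K)$ gives $K_{\zeta_0-1}<1$. The discrete-time recursion for $K$ recorded earlier in this subsection then yields $K_{\zeta_0}=K_{\zeta_0-1}<1$ on $\{\Delta A_{\zeta_0}=0\}$, so from $0=Z_{\zeta_0}=L_{\zeta_0}(1-K_{\zeta_0})$ we conclude $L_{\zeta_0}=0$ there. (Equivalently, one may quote the identity $L_{\zeta_0}=(Z_{\zeta_0}+\Delta A_{\zeta_0})/(1-K_{\zeta_0-})$ obtained in the proof of Proposition \ref{prop: equiv for rand time avoiding all stop times}.) Since $\qprob$ has $\prob$-density $L_T$, and $L_T=L_{\zeta_0}$ because $L$ is stopped at $\zeta_0$, it follows that
\[
\qprob[\Delta A_{\zeta_0}=0]=\expecp\bra{L_{\zeta_0}\,\indic_{\{\Delta A_{\zeta_0}=0\}}}=0 .
\]
As $\qprob\ll\prob$, the $\prob$-a.s.\ two-valuedness of $\Delta A_{\zeta_0}$ gives $\qprob[\Delta A_{\zeta_0}=1]=1$, whence $\qprob[\rho=\zeta_0]\ge\qprob[\Delta A_{\zeta_0}=1]=1$.

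I do not anticipate a serious obstacle: the substantive point --- that $L_{\zeta_0}$ annihilates the set $\{\Delta A_{\zeta_0}=0\}$ --- is immediate once $Z=L(1-K)$ and $Z_{\zeta_0}=0$ are in hand. The only care required is the customary bookkeeping imposed by the non-completed filtration (fixed \cadlag\ versions, and keeping ``$\prob$-a.s.'' distinct from ``everywhere'') together with the degenerate possibilities $\zeta_0=0$ and $K$ reaching $1$ strictly before $\zeta_0$.
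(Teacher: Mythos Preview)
Your proof is correct and follows essentially the same route as the paper's. Both arguments hinge on the observation that $L_{\zeta_0}=0$ precisely on the event $\{\prob[\rho=\zeta_0\mid\F_{\zeta_0}]=0\}=\{\Delta A_{\zeta_0}=0\}$; you extract this from the identity $L_{\zeta_0}=(Z_{\zeta_0}+\Delta A_{\zeta_0})/(1-K_{\zeta_0-})$ (equivalently $Z=L(1-K)$), whereas the paper reads it off directly from the discrete recursion $L_{\zeta_0}=L_{\zeta_0-1}\,\prob[\rho\ge\zeta_0\mid\F_{\zeta_0}]/Z_{\zeta_0-1}$, and then both conclude via $\qprob[\rho=\zeta_0]=\expecp[L_{\zeta_0}\indic_{\{\rho=\zeta_0\}}]=\expecp[L_{\zeta_0}]=1$.
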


\begin{pf}
On $\{\zeta_0 > 0\}$ it holds that $L_{\zeta_0} = L_{\zeta_0 - 1}
\prob[\rho= \zeta_0 | \F_{\zeta_0}] / \prob[\rho= \zeta_0
|\break   \F
_{\zeta_0-1}]$, which implies that $\{L_{\zeta_0} > 0\} = \{\prob
[\rho= \zeta_0 | \F_{\zeta_0}] > 0\}$. Since $\prob[\rho=
\zeta_0
| \F_{\zeta_0}]$ is $\prob$-a.s. $\{0, 1\}$-valued, it follows
that $\{L_{\zeta_0} > 0\} = \{\prob[\rho= \zeta_0 | \F_{\zeta
_0}] = 1\}$ holds modulo $\prob$ on $\{\zeta_0 > 0\}$. On $\{\zeta
_0 = 0\}$ both $L_{\zeta_0} = 1$ and $\prob[\rho= \zeta_0 | \F
_{\zeta_0}] = 1$ hold modulo~$\prob$. Therefore,
\[
\qprob[\rho= \zeta_0] = \expec_\prob[L_{\zeta_0}
\indic_{\{\rho
= \zeta_0\}}] = \expec_\prob \bigl[L_{\zeta_0} \prob[\rho=
\zeta_0 | \F_{\zeta_0}] \bigr] = \expec_\prob[L_{\zeta_0}]
= 1,
\]
which completes the proof.
\end{pf}

Random times that satisfy the condition in the statement of Proposition~\ref{prop: discr time rand} are $\qprob$-a.s. equal to a stopping time.
The next example shows that familiar random times that are far from
being stopping times under $\prob$ become $\qprob$-a.s. equal to a constant.

\begin{exa} \label{exa: sup in discrete time}
Let $X$ be an adapted process on $(\Omega, \F, \bF, \prob)$ such
that $\prob[X_t
\geq
X_{t-1} | \F_{t-1}] > 0$ holds $\prob$-a.s. for all $t \in\Time
\setminus\{0\}$. Define $\rho:= \max \{ t \in\Time| X_t
= X^\up_T  \}$ to be the last time of maximum of $X$. On the event
$\{\zeta_0 < T\}$, and in view of $\prob[X_{\zeta_0 + 1} \geq
X_{\zeta
_0} | \F_{\zeta_0}] > 0$ holding $\prob$-a.s., we have $\prob
[\rho
= \zeta_0 | \F_{\zeta_0}] = 0$ holding $\prob$-a.s. On the other
hand, on the event $\{\zeta_0 = T\}$ we have $\prob[\rho= \zeta_0
| \F_{\zeta_0}] = \indic_{\{\rho= T\}}$, which is $\prob$-a.s.
$\{0,1\}$-valued. From Proposition~\ref{prop: discr
time rand}, it follows that $\qprob[\rho= \zeta_0] = 1$. Since
$\prob
[\rho= \zeta_0 < T] = 0$ and $\qprob$ is absolutely continuous with
respect to $\prob$, we obtain $\qprob[\rho= T] = 1$.
\end{exa}

A continuous-time version of Example~\ref{exa: sup in discrete time}
involving Brownian motion with drift over finite time-intervals will be
given in Section~\ref{subsec: time_max_BM_fin}, where it will be
demonstrated in particular that the corresponding probabilities $\prob$
and $\qprob$ in that setting are singular.

\subsection{Time of maximum of nonnegative local martingales with zero
terminal value, continuous running supremum and no jumps while at their
running supremum} \label{subsec: sup of cont local marts}

For special cases of random times, the calculation of the canonical
pair becomes relatively easy. More information and extensive discussion
on the material of Section~\ref{subsec: sup of cont local marts} can be
found in \cite{Kar_honest}, where exact connections with so-called
\emph
{honest times} are presented.

Let us introduce some notation: $\mathcal{L}_0$ denotes the class of all
nonnegative local martingales $M$ such that $\prob[M_0 = 1, M_\infty
= 0] = 1$ (where $M_\infty:= \lim_{t \to\infty} M_t$,
noting that the limit in the definition of $L_\infty$ exists in the
$\prob$-a.s. sense, in view of the nonnegative super-martingale
convergence theorem), the running supremum process $M^* = M^\uparrow$
is continuous and $\{M_- = M^*_-\} \subseteq\{\Delta M = 0\}$
holds up to a $\prob$-evanescent set. For $M \in\mathcal{L}_0$, define
%
\begin{equation}
\label{eq: rl} \rho_M:= \sup \bigl\{t \in\Real_+| M_{t-} =
M^*_{t-} \bigr\}.
\end{equation}
(The convention $M_{0-} = 0 = M^*_{0-}$ implies that the random set
$\{t \in\Real_+| M_{t-} = M^*_{t-}\}$ is nonempty.) Since $\prob
[M_\infty= 0] = 1$ holds for $M \in\mathcal{L}_0$, it follows that
$\prob[\rho_M< \infty] = 1$. Whenever $M \in\mathcal{L}_0$, it
$\prob$-a.s. holds that
$M_{\rho_M-} = M_{\rho_M} = M^*_{\rho_M}$; in fact, as
\cite{Kar_honest}, Theorem~1.2, implies, the previous random variables are
also equal
to $M^*_{\infty}$, which makes $\rho_M$ a time of \emph{overall} maximum
of \mbox{$M \in\mathcal{L}_0$}.

\begin{prop} \label{prop: pair for max}
Let $M \in\mathcal{L}_0$, and let $\rho$ be any time of maximum of
$M$, in the
sense that $\prob[M_\rho= M^*_\infty] = 1$. Then the following are true:
\begin{itemize}
\item The canonical pair associated with $\rho$ is $(K, L) = (1 - 1 /
M^*, M)$.
\item$\rho$ avoids all stopping times on $(\Omega, \bF, \prob)$.
\item$\prob[\rho= \rho_M] = 1$.
\end{itemize}
\end{prop}

\begin{pf}
Only a sketch of the proof is provided; as already mentioned, more
information can be found in \cite{Kar_honest}. Note that $\prob[\rho
\leq\rho_M] = 1$ holds by definition on $\rho_M$; in particular,
$\prob[\rho< \infty] = 1$. The fact that $\rho$ avoids all
stopping times on $(\Omega, \bF, \prob)$ follows from Doob's maximal
identity, as
presented in \cite{MR2247846}; more precisely, $\prob[\rho= \tau|
\F_{\tau}] = 0$ holds on $\{\tau< \infty, M_{\tau} < M^*_{\tau}\}
$, while on $\{\tau< \infty, M_{\tau} = M^*_{\tau}\}$ it
follows that
\[
\prob[\rho= \tau| \F_{\tau}] = \prob \Bigl[\sup_{t \in[\tau, \infty
)}
M_t > M_\tau \big| \F_\tau \Bigr] = 1 -
\frac{M_\tau
}{M^*_\tau} = 0.
\]
Doob's maximal identity applied again implies that $Z = M / M^*$ (see
\cite{MR2247846}); then, since $\rho$ avoids all stopping times on
$(\Omega, \bF, \prob)$, one can use Remark~\ref{rem: L as mult
decomp} to conclude
that the canonical pair associated with $\rho$ is $(1 - 1/M^*, M)$.

Since $\rho_M$ is a special instance of a random time that achieves the
maximum of~$M$, it follows that the pair associated with $\rho_M$ is
also $(1 - 1/M^*, M)$. Since the canonical pair associated to a random
time completely determines its distribution, the laws of $\rho$ and
$\rho_M$ are the same under $\prob$. Combined with $\prob[\rho\leq
\rho_M
] = 1$, we obtain $\prob[\rho= \rho_M] = 1$.
\end{pf}

\begin{rem} \label{rem: unique max}
Proposition~\ref{prop: pair for max} implies in particular that there
exists an almost surely unique time of maximum of processes in
$\mathcal{L}_0$.
\end{rem}

\begin{rem} \label{rem: K less than one, L strict loc mart }
It was already hinted out in the discussion at Section~\ref{subsec:
canon pair} that the canonical pair $(K, L)$ associated with a random
time may be such that $\prob[K_\infty< 1] > 0$ holds;
additionally, $L$ may fail to be a true martingale. Indeed, in the
context of Proposition~\ref{prop: pair for max}, $M = L$ can be freely
chosen to be a strict local martingale in the terminology of \cite
{MR1478722}; furthermore, $\prob[K_\infty< 1] = \prob[L^*_\infty<
\infty] = 1$.
\end{rem}

\begin{rem}
Recall the set $\Sl$ from Section~\ref{subsec: num property}.
Specialising to the setting of Proposition~\ref{prop: pair for max},
let $\rho$ be the time of maximum of $M \in\mathcal{L}_0$. In this
case, and
since $K_\rho= 1 - 1 / M_\rho$, we obtain from Proposition~\ref{prop:
num prof of L} that $\expec_\prob[S_\rho| M_\rho] \leq M_\rho$ for all
$S \in\Sl$. This result is quite interesting---it states that \emph{no
matter} what the level of $M$ at its maximum, no other nonnegative
super-martingale with unit initial value is expected to lie above that.

Since $\Sl$ is convex, the condition $\expec_\prob[S_\rho| M_\rho]
\leq M_\rho$ for all $S \in\Sl$ is actually equivalent to the fact
that $M_\rho$ stochastically dominates all random variables in $\{
S_\rho| S \in\Sl\}$ in second order, meaning that $\expec_\prob
[U(S_\rho)] \leq\expec_\prob[U(M_\rho)]$ holds for all nondecreasing
concave functions $U\dvtx\Real_+ \mapsto\Real$. In fact, a stronger
statement is true. Since $S$ is a nonnegative super-martingale on
$(\Omega, \bF, \prob)$ with $\prob[S_0 = 1] = 1$ for all $S \in\Sl
$, Doob's maximal
inequality implies that $\prob[S_\rho> x] \leq1 \wedge(1 / x)$
holds for all $x \in(0, \infty)$. On the other hand, since $M \in
\mathcal{L}_0
$, it follows from Doob's maximal identity \cite{MR2247846} that
$\prob
[M_\rho> x] = 1 \wedge(1 / x)$ holds for all $x \in(0, \infty)$.
Therefore, $\sup_{S \in\Sl} \prob[S_\rho> x] = \prob[M_\rho>
x]$ holds for all $x \in(0, \infty)$, which implies that $M_\rho$
stochastically dominates all random variables in $\{S_\rho| S \in\Sl
\}$, even in first order.
\end{rem}

\begin{exa} \label{exa: BM - sup}
Let $\Omega$ be the canonical space of continuous functions from
$\Real
_+$ to $\Real$. Take $X$ to be the coordinate process and $\bF$ be the
right-continuous augmentation of the natural filtration of $X$. For the
time being, $\F$ is taken to be equal to $\F_\infty$. Let $\prob$ be
the unique probability on $(\Omega, \F)$ under which $X$ is a Brownian
motion with (strictly negative) drift $\mu< 0$ and unit diffusion
coefficient. Since $\prob[\lim_{t \to\infty} X_t = - \infty] =
1$, consider a random time $\rho$ that is a time of overall maximum of
$X$. Note that $\rho$ is also a time of maximum of the process $M:=
\exp(- 2 \mu X)$, which satisfies all the conditions of Proposition~\ref
{prop: pair for max}. We obtain that the canonical representation pair
$(K, L)$ of $\rho$ on $(\Omega, \F, \bF, \prob)$ is such that $K =
1 - \exp(2 \mu
X^\up
)$ and $L = \exp(- 2 \mu X)$. An application of Proposition~\ref{prop:
equiv for rand time avoiding all stop times} gives that $\sup_{t \in
\Real_+} X_t = (1 / 2 \mu) \log(1 - K_\rho)$ has the exponential
distribution with rate $- 2 \mu$ under $\prob$---of course, this fact
is well known.

Note that the process $L = \exp(- 2 \mu X)$ is a martingale on
$(\Omega, \bF, \prob)
$. Since we are working on the canonical space, a joint application of
the extension theorem of Daniell--Kolmogorov \cite{MR1121940},
Section~2.2A, and Girsanov's theorem
\cite{MR1121940}, Section~3.5,
imply there exists a probability $\qprob$ on $(\Omega, \F, \bF)$
such that
$\,\ud
\qprob= L_t \,\ud\prob$ holds on each $\F_t$ for $t \in\Real_+$,
and under
which $X$ is a Brownian motion with drift $- \mu> 0$ and unit
diffusion coefficient. In order to be in par with Assumption~\ref{ass:
existence of prob}, we carry out the enlargement of the probability
space as discussed in Remark~\ref{rem: no loss of gen the unif}. Then
it comes as a consequence of Theorem~\ref{thmm: law equal rand pseudo}
that a path of $X^\rho$ under $\prob$ can be stochastically realised
as follows:
\begin{longlist}[(1)]
\item[(1)] With $U$ being a standard uniform random variable, set $X^\up
_\infty= X_\rho= (1 / 2 \mu) \log(U)$.
\item[(2)] Given $x = X_\rho$, generate $X^{\tau_x}$ under $\qprob$, where
$\tau_x:= \inf\{t \in\Real_+ | X_t = x\}$.
\end{longlist}
\end{exa}

The next example will settle a couple of claims that were previously
made in Remark~\ref{rem: prob_not_always_there}.

\begin{exa} \label{exa: need_for_no_compl}
Consider the interval $(0, \infty)$, with an extra ``cemetery'' state
$\triangle$ appended in a way so that $\triangle$ is a topologically
isolated point of $(0, \infty) \cup\{\triangle\}$. For a
right-continuous path $\omega\dvtx \Real_+ \mapsto(0, \infty) \cup
\{
\triangle\}$, define $\zeta(\omega):= \inf\{t \in\Real_+| \omega
(t) = \triangle\}$. With the previous understanding, define $\Omega$
to be
the space of all right-continuous paths $\omega\dvtx \Real_+ \mapsto(0,
\infty) \cup\{\triangle\}$ such that $\omega(0) \in(0,\infty)$,
that are actually continuous on the interval $[0, \zeta(\omega))$ and
$\omega(t) = \triangle$ holds for all $t \in[\zeta(\omega), \infty)$.
Let $X$ denote the coordinate process on $\Omega$ and $\bF$ be the
right-continuous augmentation of the natural filtration of $X$; then
$\zeta$ becomes a stopping time on $(\Omega, \bF)$. 
Defining $\Omega$ as above is essential for ensuring that Assumption~\ref{ass: existence of prob} is valid; see the discussion on standard
systems and, more particularly, \cite{MR0368131}, Example 6.3.

Set $\beta(x) = 1 \vee x^2$ for $x \in(0, \infty)$. From the treatment
of \cite{MR1121940}, Section~5.5, there exists a probability $\prob$ on
$\F$ such that the coordinate process $X$ satisfies $\prob[X_0 = 1] =1
$ and has dynamics $\ud X_t = \beta(X_t) \,\ud W^\prob_t$, for $t \in[0,
\zeta)$, where 
$W^\prob$ is a standard Brownian motion under $\prob$. (In general,
$W^\prob$ is defined only up to time~$\zeta$.) In fact, $X$ is a strict
local martingale on $(\Omega, \bF, \prob)$ in the terminology of
\cite{MR1478722},
as follows from results in \cite{Del_Shi}. Using Feller's test for
explosions and the local martingale property, it is straightforward to
check that $\prob[\zeta\leq t, X_{\zeta-} = 0] = \prob[\zeta\leq
t] > 0$ holds for all $t \in(0, \infty)$. Let $\rho$
denote a time of overall maximum of $X$. By Proposition~\ref{prop: pair
for max}, it follows that $L = X \indic_{[\![ 0, \zeta[\![}$. In order
to characterise the probability $\qprob$ that $L$ induces as in
Assumption~\ref{ass: existence of prob}, note that, \emph{if} $L$ was
actually the density process of $\qprob$ with respect to $\prob$,
Girsanov's theorem would imply that the dynamics of $X$ under $\qprob$
are $\ud X_t = (\beta^2(X_t) / X_t) \,\ud t + \beta(X_t) \,\ud
W^\qprob
_t$ for $t \in[0, \zeta)$, with $W^\qprob$ being a standard Brownian
motion on $(\Omega, \bF, \qprob)$. Even though $L$ is not a
martingale on $(\Omega, \bF, \prob)$,
the treatment of \cite{MR1121940}, Section~5.5, implies that there
exists a probability $\qprob$ on $(\Omega, \F)$ such that the
coordinate process $X$ indeed satisfies $\qprob[X_0 = 1] = 1$ and $\ud
X_t = (\beta^2(X_t) / X_t) \,\ud t + \beta(X_t) \,\ud W^\qprob_t$ for
$t \in[0, \zeta)$, where $W^\qprob$ is a standard Brownian motion
under $\qprob$, in general defined until time $\zeta$. It is also clear
that $\qprob$ is exactly the probability that appears in Assumption~\ref
{ass: existence of prob}. Writing the formal dynamics under $\qprob$ of
$1/X$ on the stochastic interval $[\![ 0, \zeta[\![$, it is
straightforward to conclude that the law of $(1 / X_t)_{t \in[0, \zeta
)}$ under $\qprob$ is the same as the law of $(X_t)_{t \in[0, \zeta
)}$ under $\prob$. It follows that $\prob[\zeta\leq t, X_{\zeta-}
= \infty] = \prob[\zeta\leq t] > 0$ holds for all $t
\in(0, \infty)$. Coupled with the fact that $\prob[\zeta\leq t,
X_{\zeta-} = 0] = \prob[\zeta\leq t] > 0$ holds for all $t
\in
(0, \infty)$ that was established above, we conclude that neither
$\qprob\ll_{\F_t} \prob$ nor $\prob\ll_{\F_t} \qprob$ holds,
for any
$t \in(0, \infty)$.

The above example also illustrates that the filtration $\bF$ should not
be completed in any way by $\prob$, if $\qprob$ is to be defined. In
fact, let $\bF^\prob= (\F^\prob_t)_{t \in{\Real_+}}$ be \emph{any}
right-continuous filtration such that:
\begin{itemize}
\item$\bF\subseteq\bF^\prob$, and
\item if $B \subseteq\bigcup_{n \in\Natural} B_n$, where $B_n \in
\bigcup_{t
\in\Real_+} \F_t$ and $\prob[B_n] = 0$ holds for all $n \in
\Natural$, then
$B \in\F^\prob_0$.
\end{itemize}
(Note that we are \emph{not} asking that each $\F^\prob_t$, $t \in
{\Real_+}
$, contains all $\prob$-null sets of $\F_\infty$, but a weaker
condition that is tailored to avoid problems with singularities of
probabilities at infinity; see \cite{MR1906715} for the concept of such
\emph{natural}, as opposed to \emph{usual}, augmentations.) For any
$n \in\Natural$, $B_n:= \{\zeta\leq n, X_{\zeta-} = \infty\} \in
\F_n$
and $\prob[B_n] = 0$. In view of the assumptions on $\bF^\prob$,
$\{\zeta< \infty, X_{\zeta-} = \infty\} \in\F^\prob_0$. If
$\qprob$ could be defined, $\qprob|_{\F^\prob_{\eta_u}} \ll\prob
|_{\F
^\prob_{\eta_u}}$ would hold for $u \in[0, 1)$; in particular,
$\qprob
^\prob|_{\F^\prob_0} \ll\prob|_{\F^\prob_0}$. This is impossible:
indeed, we should have $\qprob[\zeta< \infty, X_{\zeta-} =
\infty
] = 1$, while it is true that $\prob[\zeta< \infty, X_{\zeta-} =
\infty] = 0$. Of course, since the filtration is \emph{not} enlarged in
order to include $\prob$-null sets, one can define $\qprob$ without problems.
\end{exa}

\subsection{Last-passage times of nonnegative continuous-path local
martingales vanishing at infinity} \label{subsec: last exit of local marts}

Let $M$ be a nonnegative local martingale on $(\Omega, \bF, \prob)$
with $M_0 = 1$,
$M$ having continuous paths and $\lim_{t \to\infty} M_t = 0$, all
holding $\prob$-a.s. In particular, and in the notation of
Section~\ref{subsec: sup of cont local marts}, $M \in\mathcal{L}_0$.
We fix $y \in\Real_+$
and define $\rho:= \sup\{t \in\Real_+ | M_t = y\}$, setting
$\rho= 0$ when the last set is empty. In words, $\rho$ is the last
passage time of $M$ at level $y$. In this case, it is straightforward that
\[
Z_t = \prob[\rho> t | \F_t] = \frac{M_t}{y}
\wedge1\qquad \mbox{for all } t \in{\Real_+}.
\]
(The set-inclusion $\{M > y\} \subseteq\{Z = 1\}$ certainly holds
modulo $\prob$; the fact that $Z = M / y$ holds on $\{M \leq y\}$
follows from Doob's maximal identity \cite{MR2247846} because $M$ has
$\prob$-a.s. continuous paths.)

Recall from Section~\ref{subsec: canon pair} that $Z = N - A$ holds for
an appropriate local martingale $N$ on $(\Omega, \bF, \prob)$. In
order to compute
$N$ and $A$ in the decomposition of~$Z$, information on the jumps of
$A$ is required. Since $A$ is the dual optional projection of $\indic_{
[\![ \rho, \infty[\![ }$ on $(\Omega, \bF, \prob)$, $\Delta A_\tau=
\prob[\rho= \tau| \F_{\tau}]$ holds for any finite stopping time
$\tau
$. Note that $A_0 = \prob[\rho= 0] = 1 - Z_0 = 0 \vee(1 - 1 / y)$.
Furthermore, on $\{\tau> 0, M_\tau\neq y\}$, it is clear that
$\prob[\rho= \tau| \F_\tau] = 0$ holds for any finite stopping
time $\tau$. Furthermore, $\prob[\rho\geq\tau| \F_\tau] =
1$ holds on $\{M_{\tau} = y\} \subseteq\{Z_\tau= 1\}$, which
implies that on $\{\tau> 0, M_\tau= y\}$ it holds that $\prob
[\rho
= \tau| \F_\tau] = 1 - \prob[\rho> \tau| \F_\tau] = 1 -
Z_\tau= 0$. We conclude that $\Delta A_\tau= 0$ on $\{\tau> 0\}$,
which implies that $A$ is a continuous-path process. It follows that $Z
= N - A$ coincides with the Doob--Mayer decomposition of $Z$, where $N$
is (necessarily) a continuous-path martingale with $N_0 = 1$. By the
Meyer--It\^o--Tanaka formula \cite{MR1037262}, Theorem IV.70, it holds
that $\ud N_t = (1/ y)\indic_{\{M_{t} \leq y\}} \,\ud M_t$ and $\ud A_t
= (1/2y) \,\ud\Lambda^M_t (y)$ for $t \in(0, \infty)$, where
$(\Lambda
^M_t (y))_{t \in\Real_+}$ denotes the semimartingale local time of $M$
at level $y$---see \cite{MR1037262}, page~216. A bit of algebra on
\eqref
{eq: real defn of K} gives
%
\begin{equation}
\label{eq: K for last exit} K = 1 - \biggl(1 \wedge\frac{1}{y} \biggr) \exp \biggl(-
\frac{1}{2 y} \Lambda^M (y) \biggr).
\end{equation}
Furthermore, since $\{M \leq y\} \subseteq\{y Z = M\}$, the
dynamics $\ud N_t = (1/ y)\*\indic_{\{M_{t} \leq y\}} \,\ud M_t$ for
$t \in\Real_+
$ and \eqref{eq: defn of L} give
%
\begin{equation}
\label{eq: L for last exit} \frac{\ud L_t}{L_{t}} = \indic_{\{M_{t} \leq y\}} \frac{\ud
M_t}{M_{t}}\qquad
\mbox{for } t \in[0, \zeta_0).
\end{equation}

\begin{rem}
If Assumption~\ref{ass: existence of prob} is valid, the dynamics in
\eqref{eq: L for last exit} suggest that the stochastic behaviour of
processes under $\qprob$ is like the one under $\prob$ when $M > y$;
furthermore, when $M \leq y$, the stochastic behaviour of processes
under $\qprob$ is like the one under the corresponding probability
$\qprob$ when the random time is the time of maximum of $M$, studied in
Section~\ref{subsec: sup of cont local marts}. The reader should also
check Example~\ref{exa: exchange} in Section~\ref{subsec:
exch-last-pass} for dynamics under $\qprob$ in a one-dimensional
diffusion setting.
\end{rem}

\begin{rem}\label{rem: last_exit_cont_mart}
Suppose that $y \in(0,1]$. In this case, $K = 1 -\break \exp(- (1 /2 y)\times
\Lambda^M (y))$, so that $\Delta K = 0$ up to a $\prob$-evanescent
set. By Proposition~\ref{prop: equiv for rand time avoiding all stop
times}, $K_\rho= K_\infty$ has the standard uniform distribution under
$\prob$. It follows that $\Lambda^M_\infty(y) = \Lambda^M_\rho(y)$
has the exponential distribution with rate parameter $2 y$ under $\prob
$. Also, note that in this case that the last exit time $\rho$ is
actually the time of maximum of $L$, which becomes apparent once one writes
\[
L = \frac{Z}{1 - K} = \biggl(\frac{M}{y} \wedge1 \biggr) \exp \biggl(
\frac{1}{2 y} \Lambda^M (y) \biggr)
\]
and use the facts that $\prob[M_\rho= y] = 1$ and $\prob [
\Lambda
^M_\rho(y) = \Lambda^M_\infty(y)  ] = 1$.
\end{rem}

\begin{exa}\label{exa: BM - last}
Recall the Brownian setting of Example~\ref{exa: BM - sup}. Suppose
that $x \in\Real$. Define
$\rho:= \sup\{t \in\Real_+| X_t = x\}$, where we set $\rho= 0$ when
the last set is empty. Recalling that $M = \exp(-2 \mu X)$, it holds
that $\rho:= \sup\{t \in\Real_+ | M_t = y\}$, where $y =
\exp
(- 2 \mu x)$. Furthermore, straightforward computations using a
combination of the two occupation-times formulas for $\Lambda^X$ and
$\Lambda^M$ imply that we can choose the local times in a way so that
$(1/ y) \Lambda^M (y) = - 2 \mu\Lambda^X(x)$. Therefore, equation
\eqref{eq: K for last exit} in this case reads $K = 1 - (1 \wedge\exp
(2 \mu x) ) \exp(\mu\Lambda^X (x))$. By Proposition~\ref{prop:
equiv for rand time avoiding all stop times}, it follows that $\Lambda
^X_\infty(x) = \Lambda^X_\rho(x)$ is such that $\prob[\Lambda
^X_\infty(x) = 0] = 1 - \exp(2 \mu x)$ when $x \in(0, \infty)$ and
$\prob[\Lambda^X_\infty(x) = 0] = 0$ when $x \in(- \infty, 0]$;
furthermore, given $\Lambda^X_\infty(x) > 0$, $\Lambda^X_\infty(x)$
has the exponential distribution with rate parameter $- \mu$ under
$\prob$.

Using Novikov's condition (\cite{MR1121940}, Section~3.5.D), it is
straightforward to check that the local martingale $L$ in \eqref{eq: L
for last exit} is an actual martingale. The extension theorem of
Daniell--Kolmogorov (\cite{MR1121940}, Section~2.2A), implies that
Assumption~\ref{ass: existence of prob} is valid in this case (modulo
the enlargement of the probability space in order to accommodate a
uniform random variable). It is straightforward to check that, under
$\qprob$, the process $X$ has dynamics $\ud X_t = \mu\sign(X_t - x)
\,\ud t + \ud W^\qprob_t$ for $t \in\Real_+$, where $\sign= \indic_{
( 0,\infty) } - \indic_{( - \infty, 0 ]}$ and $W^\qprob
$ is a
standard Brownian motion under $\qprob$. Dynamics like the ones of $X$
under $\qprob$ have been the object of study in previous literature;
see, for example, \cite{shiryaev:412} and \cite{MR1894767},
Section~5.2, page~96.\vadjust{\goodbreak}
\end{exa}

\section{Applications to financial mathematics} \label{sec: finance}

\subsection{Market behaviour up to the time of overall minimum of the
num\'eraire portfolio} \label{subsec: insider}

For the purposes of Section~\ref{subsec: insider}, we shall not be
needing Assumption~\ref{ass: existence of prob}; $(\Omega, \bF,
\prob)$ is taken to
be a filtered probability space, where $\bF$ actually satisfies the
usual conditions of right-continuity and augmentation by $\prob$-null
sets of $\F$. On $(\Omega, \bF, \prob)$, let $S = (S^i)_{i=1,\ldots, d}$ be a
sigma-bounded $d$-dimensional semi-martingale. (The condition of
sigma-boundedness is weaker than local boundedness of $S$---in fact, it
is equivalent to the existence of strictly positive and nonincreasing
predictable processes $\vartheta^i$ such that $\int_0^\cdot\vartheta
^i_t \,\ud S^i_t$ is a uniformly bounded process for each $i \in\{1,\ldots, d\}$. For the concepts of sigma-localisation and
sigma-martingales, the reader can refer to \cite{MR2013413}. The
concept of sigma-boundedness has also appeared in \cite{MR2260066}.)
For each $i \in\{1,\ldots, d\}$, $S^i$ represents the discounted,
with respect to some baseline security, price of a liquid asset in the
market. This baseline security should be thought as a locally riskless
asset. Starting with normalised unit capital, and investing according
to some $d$-dimensional, $\bF$-predictable and $S$-integrable strategy
$\vartheta$ (modelling the number of liquid assets held in the
portfolio), an economic agent's discounted wealth is given by
$X^{\vartheta} = 1 + \int_0^\cdot\vartheta_t \cdot\,\ud S_t$.
(Stochastic integrals with respect to $S$ are to be understood in the
sense of vector stochastic integration; see \cite{MR1943877}.) Define
$\X$ as the set of all processes $X^{\vartheta}$ in the previous
notation that remain nonnegative at all times.

\begin{ass} \label{ass: num}
In the above set-up, assume the following:
\begin{enumerate}[(1)]
\item[(1)]There exists $\Xhat\in\X$ with the following properties:
\begin{enumerate}[(a)]
\item[(a)]$X / \Xhat$ is a super-martingale for all $X \in\X$.
\item[(b)]$\Delta\Xhat\geq0$ up to $\prob$-evanescence. Furthermore,
with $\widehat{I}:= \inf_{t \in[0, \cdot]} \Xhat$, the
set-inclusion $\{ \Xhat_- = \widehat{I}_- \} \subseteq\{ \Delta
\Xhat
= 0\}$ holds up to $\prob$-evanescence.
\end{enumerate}
\item[(2)] There exists $X \in\X$ such that $\prob[\lim_{t \to\infty}
X_t = \infty] = 1$.
\end{enumerate}
\end{ass}

\begin{rem}
Condition (1) in Assumption~\ref{ass: num} is connected to market
viability, and in particular to \emph{absence of arbitrage of the first
kind}, that is, condition NA$_1$. (The market allows for arbitrage of
the first kind if there exists $T \in\Real_+$ and an $\F_T$-measurable
random variable $\xi$ with the properties $\prob[\xi\geq0] = 1$ and
$\prob[\xi> 0] > 0$, and such that for all $x > 0$ there exists $X
\in
x \X$, which may depend on $x$, satisfying $\prob[X_T \geq\xi] = 1$.)
Condition NA$_1$ is actually equivalent to the requirement that $\lim_{m \to\infty} \sup_{X \in\X} \prob[X_T > m] = 0$ holds for all
$T \in\Real_+$---see \cite{Kar_09_fin_add_ftap}, Proposition~1. It
then comes as a consequence of results in \cite{MR2335830} that absence
of arbitrage of the first kind is equivalent to existence of $\Xhat\in
\X$ such that $X / \Xhat$ is a super-martingale for all $X \in\X$,
which is exactly condition (1)(a). Condition (1)(b) in Assumption~\ref{ass:
num} additionally forces certain requirements which will enable use of
results from Section~\ref{subsec: sup of cont local marts} and are
crucial for the development below.
\end{rem}

Condition (1) of Assumption~\ref{ass: num} implies in particular that
$1 / \Xhat$ is a super-martingale on $(\Omega, \bF, \prob)$. The
next result refines
this observation.

\begin{lem} \label{lem: recip_num_lmart}
Under condition (1) of Assumption~\ref{ass: num}, $1 / \Xhat$ is a
local martingale on $(\Omega, \bF, \prob)$.
\end{lem}

\begin{pf}
Since both $\Xhat_- > 0$ and $\Xhat> 0$ hold, we have $\Xhat= 1 +
\int_0^\cdot\Xhat_{t-} (\varphi_t \cdot\ud S_t)$ for some
$d$-dimensional predictable and $S$-integrable process $\varphi$. A\break
straightforward application of \cite{MR2335830}, Lemma~3.4, shows that
$L:= 1 / \Xhat= 1 - \int_0^\cdot L_{t-} (\varphi_t \cdot\ud
\Shat_t)$, where
\[
\Shat:= S - \biggl[{}^\mathsf{c} S, \int_0^\cdot
\bigl(\varphi_t \cdot\ud {}^\mathsf{c} S _t
\bigr) \biggr] - \sum_{t \leq\cdot} \frac{\Delta\Xhat
_t}{\Xhat_t} \Delta
S_t,
\]
with ${}^\mathsf{c} S$ denoting the uniquely defined continuous local
martingale
part of $S$ (see, e.g., \cite{MR1943877}) and $[\cdot, \cdot]$ denotes
the operator returning the quadratic covariation of semi-martingales.
Since $L_- > 0$ and $L > 0$, $L$ is a local martingale if and only if
$\int_0^\cdot(\varphi_t \cdot\ud\Shat_t)$ is a local
martingale. The super-martingale property of $L$ already gives that
$\int_0^\cdot(\varphi_t \cdot\ud\Shat_t)$ is a local
sub-martingale. We shall show that $\int_0^\cdot(\varphi_t, \ud
\Shat_t)$ is also a local super-martingale. Since $2 \varphi\cdot
\Delta S = 2 (\Delta\Xhat/ \Xhat_-) \geq0$, the process $X'$ defined
implicitly via $X' = 1 + \int_0^\cdot X'_{t-} (2 \varphi_t \cdot\ud
S_t)$ is an element of $\X$ with $X' > 0$ and \mbox{$X'_- > 0$}.
Therefore, $X' / \Xhat$ is a nonnegative super-martingale. Again,
\cite{MR2335830}, Lemma~3.4, shows that $X' / \Xhat= 1 + \int_0^\cdot
(X'_{t-} / \Xhat_{t-}) (\varphi_t \cdot\ud\Shat_t)$. The
super-martingale property of $X' / \Xhat$ implies that $\int_0^\cdot
(\varphi_t \cdot\ud\Shat_t)$ is a local super-martingale, which
completes the argument.
\end{pf}

\begin{rem}
Lemma~\ref{lem: recip_num_lmart} above follows part of the proof of
\cite{Kar09}, Theorem~2.15. While the latter result really requires the
full force of condition (1) in Assumption~\ref{ass: num} in order to be
valid, the set-inclusion $\{ \Xhat_- = \widehat{I}_- \} \subseteq\{
\Delta\Xhat= 0\}$ was erroneously neglected in \cite{Kar09}, Theorem~2.15.
\end{rem}

Given condition (1)(a) in Assumption~\ref{ass: num}, the nonnegative
super-mar\-tin\-gale convergence theorem implies that condition (2)\vspace*{1pt} in
Assumption~\ref{ass: num} is actually equivalent to $\prob [ \lim_{t \to\infty} \Xhat_t = \infty ] = 1$. Let $L:= 1 / \Xhat$.
Since $L_0 = 1$ and Assumption~\ref{ass: num} implies that $L^*$ is
continuous and $\prob[L_\infty= 0] = 1$, Lemma~\ref{lem:
recip_num_lmart} and condition (1) of Assumption~\ref{ass: num} imply
that $L \in\mathcal{L}_0$, in the notation of Section~\ref{subsec:
sup of cont
local marts}. By Proposition~\ref{prop: pair for max}, it follows that
there exists a $\prob$-a.s. unique time $\rho$ of minimum of~$\Xhat$,
and that $(1 - 1/L^*, L)$ is the canonical representation pair
associated with~$\rho$. Let $\bG= (\G_t)_{t \in{\Real_+}}$ be the smallest
right-continuous filtration that contains $\bF$ and makes the random
variable $\widehat{I}_\infty=\inf_{t \in\Real_+} \Xhat_t$ be $\G
_0$-measurable. In this case, $\rho$ is $\prob$-a.s. equal to the first
time that $\Xhat$ equals $\widehat{I}_\infty$, which is a stopping time
on $(\Omega, \bG)$; since $\bF$ satisfies the usual conditions, we conclude
that $\rho$ is a stopping time on $(\Omega, \bG)$.

When $S$ consists of continuous-path semi-martingales, a version of the
next result appears in \cite{Kar12}, Theorem~1.4. The strengthened
result that is presented here has a short proof due to the
previously-built theory.

\begin{thmm} \label{thmm: main}
Under Assumption~\ref{ass: num} and the above notation, the
$d$-dimensional process $S^\rho= (S_{\rho\wedge t})_{t \in
\Real
_+}$ is a sigma-martingale on $(\Omega, \bG, \prob)$.
\end{thmm}

\begin{pf}
Let $X \in\X$. In the notation of Section~\ref{subsec: num property},
since $(X / \Xhat) \in\Sl$ and $\rho$ is a time of maximum of $L:=
1 / \Xhat$, which in particular avoids all stopping times in view of
Proposition~\ref{prop: pair for max}, it follows that $\expec_\prob
 [
X_\rho/ \Xhat_\rho| K_\rho ] \leq1 / \Xhat_\rho$. Since
$K_\rho= 1 - 1 / \Xhat_\rho$, the last equality translates to
$\expec_\prob
[X_\rho| K_\rho] \leq1$; in other words, $\expec_\prob[X_\rho
f(K_\rho)] \leq\expec_\prob[f(K_\rho)]$ is valid for all $X \in
\X$
and Borel-measurable $f\dvtx[0, 1)\mapsto\Real_+$. Now, fix $t_1 \in
\Real
_+$, $t_2 \in(t_1, \infty)$, $A \in\F_{t_1}$ and $X \in\X$ with $X
\geq1/2$. Let $\vartheta$ be so that $X = 1 + \int_0^\cdot\vartheta_t
\cdot\ud S_t$, and define $\vartheta':= (1 / X_{t_1}) \indic_A
\indic_{]\!] t_1, t_2 ]\!]} \vartheta$ and $X':= 1 + \int_0^\cdot
\vartheta'_t \cdot\ud S_t$. It is straightforward\vspace*{1pt} to check that $X'
\in\X$ and that $X'_\rho= \indic_{\Omega\setminus A} + (X^\rho
_{t_2} / X^\rho_{t_1}) \indic_A$. Therefore, the inequality $\expec
_\prob
[X'_\rho f(K_\rho)] \leq\expec_\prob[f(K_\rho)]$ gives $\expec
_\prob
[(X^\rho_{t_2} / X^\rho_{t_1}) f(K_\rho) \indic_A] \leq
\expec_\prob
[f(K_\rho) \indic_A]$. Defining $\G^0_t = \F_{t} \vee\sigma
(K_\rho)$
for all $t \in\Real_+$ and ranging $A \in\F_{t_1}$, we obtain that
$\expec_\prob
[X^\rho_{t_2} | \G^0_{t_1}] \leq X^\rho_{t_1}$ holds for all
$t_1 \in\Real_+$, $t_2 \in(t_1, \infty)$ and $X \in\X$ with $X
\geq
1/2$. By definition of the filtration $\bG$, $\G_{t_1} = \bigcap_{t >
t_1} \G^0_t$ holds; then, the conditional version of Fatou's lemma
gives that $\expec_\prob[X^\rho_{t_2} | \G_{t_1}] \leq X^\rho
_{t_1}$ holds for all $t_1 \in\Real_+$, $t_2 \in(t_1, \infty)$ and $X
\in\X$ with $X \geq1/2$. Ranging $t_1 \in\Real_+$ and $t_2 \in(t_1,
\infty)$, we obtain that $X^\rho$ is a super-martingale on $(\Omega,
\bG, \prob)$
for all $X \in\X$ with $X \geq1/2$.

For each $i \in\{1,\ldots, d\}$ pick a strictly positive and
nonincreasing predictable process $\vartheta^i$ such that $|\int_0^\cdot\vartheta^i_t \,\ud S^i_t| \leq1/2$ identically holds. In this
case, both processes $1 + \int_0^\cdot\vartheta^i_t \,\ud S^i_t$ and $1
- \int_0^\cdot\vartheta^i_t \,\ud S^i_t$ are elements of $\X$ and
bounded below by $1/2$. It follows that $\int_0^{\rho\wedge\cdot}
\vartheta^i_t \,\ud S^i_t$ is both a super-martingale and a
sub-martingale on $(\Omega, \bG, \prob)$, which means that it is a
martingale on
$(\Omega, \bG, \prob)$. Since $\vartheta^i$ is strictly positive,
this implies that
$(S^i_{\rho\wedge t})_{t \in\Real_+}$ is a sigma-martingale on
$(\Omega, \bG, \prob)$ for
all $i \in\{1,\ldots, d\}$.
\end{pf}

The importance of Theorem~\ref{thmm: main} lies in the following
observation: with the ``insider information'' flow $\bG$, investing in
the risky assets before time $\rho$ gives the same instantaneous return
as the (locally) riskless asset, but entails (locally) higher risk;
therefore, before $\rho$ an insider would not be willing to take any
position on the risky assets. In a sense, Theorem~\ref{thmm: main}
endows $\Xhat$ the quality of an index of market status. Extensive
discussion on this and further remarks can be found in \cite{Kar12}.

\subsection{Valuation of exchange options and last-passage times}
\label
{subsec: exch-last-pass}

In recent literature, there has been considerable interest in
representations of the value of plain vanilla options in terms of last
passage times---in fact, the monograph \cite{MR2582990} contains much
of this development. Last-passage times for continuous local
martingales that vanish at infinity were considered in Section~\ref
{subsec: last exit of local marts}; that discussion will be used here
to provide a further representation for the value of exchange options.

On $(\Omega, \bF, \prob)$, let $S^0$ and $S^1$ be two nonnegative
continuous-path
semi-martingales. The process $S^0$ satisfies $S^0_0 = 1$ and $\prob
[\inf_{t \in[0, T]} S^0_t > 0] = 1$ for all $T \in\Real_+$, and
should be considered as a baseline security. Set $R:= S^1 / S^0$ to
denote the ``exchange rate,'' that is, the price process $S^1$
denominated in units of the baseline asset with price process $S^0$.

In the above market, consider an option to exchange at time $T \in
\Real
_+$ a unit of a security with price process $S^1$ for $\kappa$ units of
the baseline security $S^0$. The option will be valid at time $T$ only
if the event $\{\sigma\leq T\}$ has occurred, where $\sigma$ is a
stopping time on $(\Omega, \bF)$. For example, one could take $\sigma
= \inf
\{t \in\Real_+| R_t > \lambda\}$ for some $\lambda> \kappa$, in which
case the security is really an ``up-and-in'' exchange option. For a
plain vanilla exchange option, one may set $\sigma= 0$.

Given that $\prob$ is the valuation measure and that discounting is
done using the baseline security, as is typically the case, the value
of a European exchange option of the aforementioned type, to be
exercised at time $T \in\Real_+$, is $\mathsf{EE}_T= \expec_\prob
[(\kappa- R_T)_+ \indic_{\{\sigma\leq T\}}]$. Note that $\prob$ is an
equivalent local martingale measure for $R$, which means that $R$ is a
nonnegative local martingale on $(\Omega, \bF, \prob)$.

\begin{rem}
In fact, the valuation formula for the European option is valid also
for the value of the corresponding American option. In order to see
this, let $\Stop_{[0, T]}$ be the class of all stopping times $\tau$ on
$(\Omega, \bF)
$ satisfying $0 \leq\tau\leq T$. Using $\prob$ as valuation measure,
an American option of the previous type has value $\ame:= \sup_{\tau
\in\Stop_{[0, T]}} \expec_\prob[(\kappa- R_\tau)_+ \indic_{\{
\sigma\leq\tau\}}]$. Given that $R$ is a nonnegative local
martingale on
$(\Omega, \bF, \prob)
$, thus a super-martingale on $(\Omega, \bF, \prob)$, it is
straightforward that the
process $((\kappa- R_t)_+)_{t \in\Real_+}$ is a sub-martingale on
$(\Omega, \bF, \prob)
$. Then, for any $\tau\in\Stop_{[0, T]}$ it holds that
\[
\expec_\prob \bigl[(\kappa- R_T)_+ \indic_{\{\sigma\leq T\}}
| \F _\tau \bigr] \geq\expec_\prob \bigl[(\kappa-
R_T)_+ \indic_{\{\sigma\leq
\tau\}} | \F_\tau \bigr] \geq(
\kappa- R_\tau)_+ \indic_{\{\sigma\leq
\tau\}},
\]
which readily gives
\[
\ame= \sup_{\tau\in\Stop_{[0, T]}} \expec_\prob \bigl[(\kappa-
R_\tau )_+ \indic_{\{\sigma\leq\tau\}} \bigr] = \expec_\prob
\bigl[( \kappa- R_T)_+ \indic_{\{\sigma\leq T\}} \bigr] =
\mathsf{EE}_T.
\]
\end{rem}

For $\kappa\in\Real_+$, define the random time $\rho:= \sup\{t \in
\Real_+ | R_t = \kappa\}$, where we set $\rho= 0$ if the last
set is empty. Under the force of Assumption~\ref{ass: existence of
prob}, denote by $\qprob$ the probability corresponding to $\rho$.

\begin{prop} \label{prop: exchange}
In the above set-up, suppose that $\prob[\lim_{t \to\infty} R_t =
0] = 1$ and that the validity of Assumption~\ref{ass: existence of
prob} is in force for the random time $\rho$. Then it holds that
%
\begin{eqnarray}
\label{eq: eur_rep} \mathsf{EE}_T
&=& \kappa\prob[\rho\wedge\sigma\leq T]
\nonumber
\\[-8pt]
\\[-8pt]
\nonumber
&=&
\kappa\prob [\sigma\leq T] - \kappa(1\wedge\kappa) \expec_\qprob \biggl[
\exp \biggl(- \frac{\kappa}{2} \Lambda^R_{T} (\kappa)
\biggr) \indic_{\{\sigma\leq T\}
} \biggr].
\end{eqnarray}
\end{prop}

\begin{pf}
Under the validity of $\prob[\lim_{t \to\infty} R_t = 0] = 1$,
the equality $(\kappa- R_T)_+ = \kappa\prob[\rho\leq T | \F_T]$
holds in view of \cite{MR2582990}, Theorem~2.5; then the
first equality in \eqref{eq: eur_rep} follows from the fact that $\{
\sigma\leq T\} \in\F_T$. For the second equality in \eqref{eq:
eur_rep}, note that, in view of \eqref{eq: K for last exit}, the
process $K$ in the canonical representation pair of $\rho$ on $(\Omega, \bF, \prob)$
is such that $1 - K = (1 \wedge\kappa) \exp(- (\kappa/ 2) \Lambda
^R (\kappa))$. By Remark~\ref{rem: tail_prob}, and since
$\{\sigma\leq T\} \in\F_T$,
\[
\prob[\rho\wedge\sigma\leq T] = \prob[\sigma\leq T] - \prob[\sigma\leq T, \rho>
T] = \prob[\sigma\leq T] - \expec_\qprob \bigl[(1 - K_T)
\indic_{\{\sigma\leq T\}} \bigr],
\]
which completes the proof.
\end{pf}

\begin{exa} \label{exa: exchange}
We present here an example where the ``exchange rate'' process $R$
behaves as a one-dimensional diffusion under $\prob$. Exact modelling
of $S^0$ and $S^1$ is not necessary.

The filtered measurable space will be the exact one considered in
Example~\ref{exa: need_for_no_compl}, where the reader is referred to
for all the details. Recall that $X$ denotes the coordinate process and
$\bF$ be the right-continuous augmentation of the natural filtration of
$X$. The sigma-algebra $\F$ is taken to be equal to $\F_\infty$. Note
that this set-up is essential for ensuring that Assumption~\ref{ass:
existence of prob} is valid (modulo the enlargement discussed in Remark~\ref{rem: no loss of gen the unif} in order to accommodate for an
independent uniform random variable).

Fix a function $\beta\dvtx (0, \infty) \mapsto(0, \infty)$ such
that $1 /
\beta^2$ is locally integrable on $(0, \infty)$. From the treatment of
\cite{MR1121940}, Section~5.5, for any $x_0 \in\Real_+$ there exists a
probability $\prob$ on $\F$ (which coincides with the Borel
sigma-algebra on $\Omega$) such that $\prob[X_0 = x_0] =1$, and $X$
has dynamics
\[
\frac{\ud X_t}{X_t} = \beta(X_t) \,\ud W^\prob_t\qquad
\mbox{for } t \in [0, \zeta),
\]
where recall that $\zeta:= \inf\{t \in\Real_+| X_t = \triangle\}$,
and $W^\prob$ is a standard Brownian motion (defined only up to time
$\zeta$) under $\prob$. Due to the nonnegative local martingale
convergence theorem and the fact that $\beta\dvtx (0, \infty)
\mapsto(0,
\infty)$ is such that $1 / \beta^2$ is locally integrable on $(0,
\infty
)$, it follows in straightforward way that $\prob[X_{\zeta-} = 0]
= 1$. Letting $R:= X \indic_{[\![ 0, \zeta[\![}$, note that the
assumptions of Proposition~\ref{prop: exchange} are satisfied.

Regarding the probability $\qprob$, \eqref{eq: L for last exit} implies
that the local martingale $L$ on $(\Omega, \bF, \prob)$ in the canonical
representation pair of $\rho$ is such that $\ud L_t / L_t = \indic
_{\{X_t \leq\kappa\}} (\ud X_t / X_t ) = \indic_{\{X_t \leq\kappa\}}
\beta(X_t) \,\ud W^{\prob}_t$, for $t \in[0, \zeta)$. Using Girsanov's
theorem, it is straightforward to then check that
%
\begin{equation}
\label{eq: exch_dyn_q} \frac{\ud X_t}{X_t} = \beta^2 (X_t)
\indic_{\{X_t \leq\kappa\}} \,\ud t + \beta(X_t) \,\ud W^{\qprob}_t\qquad
\mbox{for } t \in[0, \zeta),
\end{equation}
where $W^{\qprob}$ is a standard Brownian motion under $\qprob$. (Even
though $L$ may fail to be a true martingale on $(\Omega, \bF, \prob
)$, one infers
the existence of the probability $\qprob$ on $(\Omega, \F)$ such
that the dynamics of $X$ are given by \eqref{eq: exch_dyn_q} using
knowledge of weak solutions of stochastic differential equations with
possible explosions from the treatment of \cite{MR1121940},
Section~5.5.) By employing Feller's test for explosions,
it can be
easily seen that $X$ under $\qprob$ does not explode, that is, does not
exit $(0, \infty)$ in finite time; that is, $R = X$ under $\qprob$. In
fact, by calculating the scale function of~$X$, one may conclude that
$R = X$ becomes a recurrent Markov process under $\qprob$.
\end{exa}

\section{Time of maximum and last-passage times of Brownian motion with
drift over finite time-intervals} \label{sec: BM on finite interv}

\subsection{Set-up}

For the purposes of Section~\ref{sec: BM on finite interv}, $T \in
\Real
_+$ will be fixed. Define $\Omega$ as the canonical path-space of
continuous functions from $[0, T)$ to $\Real$. Call $X$ the coordinate
process, let $\bF= (\F_t)_{t \in[0, T)}$ be the right-continuous
augmentation of the natural filtration of $X$, and set $\F= \bigvee_{t
\in[0, T)} \F_t$.

\begin{rem}\label{rem: BM_fin_q_exist}
It is important to note that the canonical space of processes with
time-index $[0, T)$, as opposed to $[0, T]$, is considered here. As
will become clear, it is in this setting that we can ensure later the
validity of Assumption~\ref{ass: existence of prob} (modulo the
enlargement of the space in order to accommodate a random variable with
the uniform law and independent of $\F_\infty$, as discussed in Remark~\ref{rem: no loss of gen the unif}).
\end{rem}

Fix $\mu\in\Real$. On $(\Omega, \F)$, let $\prob$ be the probability
under which $X$ is a Brownian motion with drift $\mu$ and unit
diffusion coefficient. In the rest of Section~\ref{sec: BM on finite
interv}, and using the previously-developed theory, we discuss the
behaviour of $X$ up to the time of maximum and last-passage times of
$X$. We shall calculate the canonical associated pair $(K, L)$ in each
case, and via $L$ we shall describe the dynamics of $X$ under~$\qprob$
(generated by $L$). In view of Section~\ref{sec: equal in law}, this
gives a complete characterisation of the stochastic behaviour of
optional processes up to the random times that are considered.

\subsection{Time of maximum} \label{subsec: time_max_BM_fin}

Define $\rho:= \sup \{ t \in[0, T)| X_t = \sup_{s \in[0, T)}
X_s  \}$, where by convention one sets $\rho= T$ if the previous
set is empty.

In the sequel, we shall make use of the following functions, related to
the standard normal law:
\[
\oPhi(x) = \int_x^\infty\phi(y) \,\ud y\qquad
\mbox{where } \phi (x) = \frac{1}{\sqrt{2 \pi}} \exp \biggl(- \frac{x^2}{2}
\biggr), \mbox { for } x \in\Real.
\]
Define the function $F_\mu\dvtx (0, \infty) \times\Real_+ \mapsto
[0,1]$ via
%
\begin{eqnarray}
\label{eq: F_BM_fin} F_\mu(\tau, z)&:=& \exp(2 \mu z) \oPhi \biggl(
\frac{z + \mu\tau}{\sqrt
{\tau}} \biggr) + \oPhi \biggl(\frac{z - \mu\tau}{\sqrt{\tau}} \biggr)
\nonumber
\\[-8pt]
\\[-8pt]
\nonumber
&=& \int
_0^\tau \biggl(\frac{z}{\sqrt{2 \pi s^{3}}} \exp \biggl(-
\frac{(z - \mu
s)^2}{2 s } \biggr) \biggr) \,\ud s,
\end{eqnarray}
for $(\tau,z) \in(0, \infty) \times\Real_+$. The second equality
follows upon differentiation of the defining quantity giving $F_\mu$
with respect to the temporal variable. The fact that $F_\mu$ is $[0,
1]$-valued follows from the second representation, since the quantity
inside the integral is the density of the first hitting time of the
level $z$ for Brownian motion with drift $\mu$; see \cite{MR1121940}, page~197,
equation (5.12). By this last fact and the Markovian
property of Brownian motion, it is straightforward that
\[
Z_t =\prob[\rho> t | \F_t] = F_\mu
\bigl(T-t, X^\up_t - X_t \bigr)\qquad \mbox{for }
t \in[0, T),
\]
where recall that $X^\up= \sup_{t \in[0, \cdot]} X$. In preparation
for the formulas below, note that
%
\begin{eqnarray}
\label{eq: dF_BM_fin} \frac{\partial F_\mu}{\partial z} (\tau, z) = 2 \mu\exp(2 \mu z) \oPhi \biggl(
\frac{z + \mu\tau}{\sqrt{\tau}} \biggr) - \frac{2}{\sqrt
{\tau}} \phi \biggl(\frac{z - \mu\tau}{\sqrt{\tau}}
\biggr)
\nonumber
\\[-8pt]
\\[-8pt]
\eqntext{\mbox{for } (\tau,z) \in(0, \infty) \times\Real_+,}
\end{eqnarray}
where the fact that $\exp(2 \mu z) \phi(z / \sqrt{\tau} + \mu\sqrt
{\tau}) = \phi(z / \sqrt{\tau} - \mu\sqrt{\tau})$ for
$(\tau,z) \in(0, \infty) \times\Real_+$ holds was used in the above
calculation. Define also the function $f_\mu\dvtx (0, \infty)
\mapsto
\Real
$ via
\[
f_\mu(\tau):= - \frac{\partial F_\mu}{\partial z} (\tau, 0) = \frac
{1}{\sqrt{2 \pi\tau}}
\exp \biggl(- \frac{\mu^2 \tau}{2} \biggr)
- 2 \mu\oPhi (\mu\sqrt{\tau})\qquad \mbox{for
} \tau\in(0, \infty).
\]
Upon simple differentiation, it is easy to check that the function
$f_\mu$ is decreasing in $\tau\in(0, \infty)$. As $\lim_{\tau\to
\infty} f_\mu(\tau) = \max\{0, - 2 \mu\} \in\Real_+$, $f_\mu
$ is
nonnegative.

Since $Z$ has continuous paths and all martingales on $(\Omega, \bF,
\prob)$ have
continuous paths as well, it follows that $A$ is the continuous
nondecreasing process appearing in the additive Doob--Meyer
decomposition of $-Z$. In view of Proposition~\ref{prop: equiv for rand
time avoiding all stop times}, $\rho$ avoids all stopping times on
$(\Omega, \bF, \prob)$. A simple use of It\^o's formula gives, after
some term
cancellations, that
%
\begin{eqnarray}
\label{eq: Z_dyn_BM_fin_max} \ud Z_t = - \frac{\partial F_\mu}{\partial z} \bigl(T-t,
X^\up_t - X_t \bigr) \,\ud(X_t -
\mu t) - f_\mu(T - t) \,\ud X^\up_t
\nonumber
\\[-8pt]
\\[-8pt]
\eqntext{\mbox{for }
t \in[0, T).}
\end{eqnarray}
In particular, it holds that $A = \int_0^{\cdot} f_\mu(T - t) \,\ud
X^\up
_t$. From \eqref{eq: real defn of K}, it then follows that
%
\begin{equation}
\label{eq: K_BM_fin_max} K_t = 1 - \exp \biggl(- \int_0^t
f_\mu(T - s) \,\ud X^\up_s \biggr)\qquad \mbox {for
} t \in[0, T).
\end{equation}
Using the equality $L = Z / (1 - K)$, it follows that
%
\begin{equation}\qquad
\label{eq: L_BM_fin_max} L_t = F_\mu \bigl(T-t,
X^\up_t - X_t \bigr) \exp \biggl(\int
_0^t f_\mu(T - s) \,\ud
X^\up_s \biggr) \qquad\mbox{for } t \in[0, T).
\end{equation}
The next result ensures that Assumption~\ref{ass: existence of prob}
will be valid in this setting.

\begin{lem} \label{lem: Q_exist_BM_fim_max}
For all $t \in[0, T)$, it holds that $\expec_\prob[L_t] = 1$.
\end{lem}

\begin{pf}
Since $(L_t)_{t \in[0, T)}$ is a nonnegative local martingale on
$(\Omega, \bF, \prob)$ with $L_0 = 1$, $\expec_\prob[L_t] = 1$ for
all $t \in[0, T)$
will follow if $\expec_\prob[L^*_t] < \infty$ for all $t \in[0, T)$ is
established. Given that the function $F_\mu$ is a $[0,1]$-valued and
that the function $f_\mu$ is decreasing, \eqref{eq: L_BM_fin_max}
implies that $L^*_t \leq\exp ( f_\mu(T - t) X^\up_t  )$ holds
for all $t \in[0, T)$. Therefore, $\expec_\prob[L^*_t] < \infty$
for all
$t \in[0, T)$ will follow if it is established that $\expec_\prob
 [
\exp
( a X^\up_t )  ] < \infty$ holds for all $a \in\Real$ and $t
\in
\Real_+$. To see this, note first that in view of Girsanov's theorem
and H\"older's inequality, one may assume that $\mu= 0$. Then the
claim follows because, for $\mu= 0$, the law of $X^\up_t$ under
$\prob
$ is the same as the law of $|X_t|$ under $\prob$, and all exponential
moments of the latter law are finite.
\end{pf}

By Lemma~\ref{lem: Q_exist_BM_fim_max} and the extension theorem of
Daniell--Kolmogorov \cite{MR1121940}, Section~2.2A, there exists a
probability $\qprob$ on $(\Omega, \F)$ such that $L_t$ is the
density of $\qprob$ with respect to $\prob$ on $\F_t$ for all $t \in
[0, T)$. (It is exactly here that the point of Remark~\ref{rem:
BM_fin_q_exist} becomes relevant.) It follows either from \eqref{eq:
Z_dyn_BM_fin_max} of from \eqref{eq: L_BM_fin_max} that the dynamics of
$L$ are
\[
\frac{\ud L_t}{L_t} = - \frac{ (\partial F_\mu/ \partial z)
(T-t, X^\up_t - X_t) }{F_\mu(T-t, X^\up_t - X_t)} \,\ud(X_t - \mu t)\qquad
\mbox{for } t \in[0, T).
\]
A straightforward application of Girsanov's theorem imply that, under
$\qprob$, the dynamics of $X$ are
%
\begin{equation}
\label{eq: weird dynamics BM} \,\ud X_t = G_\mu \bigl(T-t,
X^\up_t - X_t \bigr) \,\ud t + \ud
W^\qprob_t\qquad \mbox{for } t \in[0, T),
\end{equation}
where $W^\qprob$ is a standard Brownian motion on $(\Omega, \bF,
\qprob)$ and
$G_\mu\dvtx
(0, \infty) \times\Real_+ \mapsto\Real$ is a function satisfying
$G_\mu(\tau, z) = \mu- (\partial F_\mu/ \partial z) (\tau,
z) /
F_\mu(\tau, z)$ for $(\tau,z) \in(0, \infty) \times\Real_+$. A
use of
\eqref{eq: dF_BM_fin} gives
%
\begin{eqnarray}
\label{eq: G_BM_fin} &&G_\mu(\tau, z)\nonumber\\
 &&\qquad= \mu+ \frac{ (2 / \sqrt{\tau}) \phi(z / \sqrt
{\tau} - \mu\sqrt{\tau} ) - 2 \mu\exp(2 \mu z) \oPhi
(z / \sqrt{\tau} + \mu\sqrt{\tau} )} {\oPhi(z / \sqrt{\tau} -
\mu\sqrt{\tau} ) + \exp(2 \mu z) \oPhi(z / \sqrt{\tau} + \mu
\sqrt{\tau} )}
\\
\eqntext{\mbox{for }
(\tau,z) \in(0, \infty) \times \Real_+.}
\end{eqnarray}

\begin{rem}
When $\mu\in(- \infty, 0)$, it is straightforward to calculate\break  $\lim_{\tau\to\infty} F_{\mu} (\tau, z) = \exp(2 \mu z)$ and $\lim_{\tau
\to\infty} G_{\mu} (\tau, z) = - \mu$ for all $z \in\Real_+$, as well
as $\lim_{\tau\to\infty} f_{\mu} (\tau, z) = - 2 \mu$. Formally
plugging these long-run limits in \eqref{eq: K_BM_fin_max}, \eqref{eq:
L_BM_fin_max} and \eqref{eq: weird dynamics BM}, the set-up and results
of Example~\ref{exa: BM - sup} are recovered.
\end{rem}

\begin{rem} \label{rem: BM_fin_hor_case_zero}
When $\mu= 0$, previous formulas simplify significantly. In this case,
$F_0(\tau, z) = 2 \oPhi(z / \sqrt{\tau})$ for $(\tau,z) \in(0,
\infty) \times\Real_+$, $f_0(\tau) = 1 / \sqrt{2 \pi\tau}$ for
$\tau
\in(0, \infty)$, and the function $G_0$ appearing in the dynamics
\eqref{eq: weird dynamics BM} is given by $G_0(\tau, z) = (1 / \sqrt
{\tau})(\phi(z / \sqrt{\tau} ) / \oPhi(z / \sqrt{\tau}) )$, for
$(\tau,z) \in(0, \infty) \times\Real_+$. Upon
differentiation, it is straightforward to check that $(0, \infty)
\times\Real_+ \ni(\tau, z) \mapsto G_0(\tau, z)$ is decreasing in
$\tau$ and increasing in $z$. This is a very plausible behaviour:
recalling the dynamics~\eqref{eq: weird dynamics BM} under $\qprob$,
one would expect the drift to increase both when $X$ is moving away
from its maximum and when the ``time to maturity'' $\tau= T - t$ is
getting shorter.
\end{rem}

It is conjectured that the function $(0, \infty) \times(0, \infty)
\ni
(\tau, z) \mapsto G_\mu(\tau, z)$ is decreasing in $\tau$ and
increasing in $z$ for all $\mu\in\Real$---this was discussed for the
case $\mu= 0$ in Remark~\ref{rem: BM_fin_hor_case_zero}. However, the
calculations toward proving such a statement for all $\mu\in\Real$
seem quite tedious. Proposition~\ref{prop: BM_on_finite_int} that
follows gives important information on $G_\mu$ for arbitrary $\mu\in
\Real$.

\begin{prop} \label{prop: BM_on_finite_int}
The function $G_\mu$ is $\Real_+$-valued and such that\break  $\liminf_{\tau
\downarrow0} (\inf_{z \in[w, \infty)} (\tau G_\mu(\tau, z) ))
\geq w$ holds for all $w \in(0, \infty)$. In parti-\break cular,~it
follows that $X$ is a local sub-martingale on $(\Omega, \bF, \qprob
)$ and that\break
$\qprob [ \liminf_{t \to T} (X^\up_t - X_t) = 0  ] = 1$.
\end{prop}

\begin{pf} 
Let $c \in\Real$ and $d \in\Real$. A simple change of variables
implies that
\begin{eqnarray*}
\exp(2cd) \oPhi(c+d) &=& \int_{c+d}^{\infty} \exp
\biggl(2 c d - \frac
{x^2}{2} \biggr) \frac{\ud x}{\sqrt{2 \pi}} \\
&= &\int
_{d -c}^{\infty} \exp \biggl(2 c d - \frac{(x + 2 c)^2}{2}
\biggr) \frac{\ud x}{\sqrt{2 \pi}}
\\
&=& \int_{d -c}^{\infty} \exp \bigl(2 c (d - c- x)
\bigr) \exp \biggl(- \frac
{x^2}{2} \biggr) \frac{\ud x}{\sqrt{2 \pi}}.
\end{eqnarray*}
When $x \geq d - c$, it holds that $c \exp(2 c (d - c- x)) \leq
c$, for any $c \in\Real$. Therefore, from the equalities above we
obtain $c \exp(2cd) \oPhi(c+d) \leq c \oPhi(d - c)$. Applying the
previous inequality above with $c = \mu\sqrt{\tau}$ and $d = z /
\sqrt
{\tau}$, it follows that $\mu\oPhi(z / \sqrt{\tau} - \mu\sqrt
{\tau}) - \mu\exp(2 \mu z) \oPhi(z / \sqrt{\tau} + \mu\sqrt
{\tau} ) \geq0$ for all $(\tau,z) \in(0, \infty) \times
\Real
_+$. By \eqref{eq: G_BM_fin}, we obtain
%
\begin{eqnarray}
\label{eq: G_estimate} G_\mu(\tau, z) \geq\frac{(2 / \sqrt{\tau}) \phi(z / \sqrt{\tau
} - \mu\sqrt{\tau} )} {\oPhi(z / \sqrt{\tau} - \mu\sqrt{\tau
} ) + \exp(2 \mu z) \oPhi(z / \sqrt{\tau} + \mu\sqrt{\tau} )}
\nonumber
\\[-8pt]
\\[-8pt]
\eqntext{\mbox{for all
} (\tau,z) \in(0, \infty) \times \Real_+,}
\end{eqnarray}
from which it immediately follows that $G_\mu$ is a nonnegative
function. The fact that $X$ is a local sub-martingale in $(\Omega, \bF, \qprob)$ then
follows from the dynamics \eqref{eq: weird dynamics BM}.

Continuing, fix $w \in(0, \infty)$. Using the uniform estimates $1 -
1/x^2 \leq x \oPhi( x) /\break   \phi(x) \leq1$, valid for $x \in(0, \infty)$
(see, e.g., \cite{MR2722836}, Theorem~1.2.3, page~11), and the
fact that the equality $\exp(2 \mu z) \phi(z / \sqrt{\tau} + \mu
\sqrt{\tau}) = \phi(z / \sqrt{\tau} - \mu\sqrt{\tau})$
holds for all $(\tau,z) \in(0, \infty) \times\Real_+$, we obtain that
\[
\lim_{\tau\downarrow0} \biggl(\inf_{z \geq w} \biggl(
\frac{ 2 \sqrt{\tau}
\phi(z / \sqrt{\tau} - \mu\sqrt{\tau} ) }{\oPhi(z / \sqrt{\tau
} - \mu\sqrt{\tau} ) + \exp(2 \mu z) \Phi(z / \sqrt{\tau} + \mu
\sqrt{\tau} )} \biggr) \biggr) = w.
\]
Therefore, \eqref{eq: G_estimate} gives $\liminf_{\tau\downarrow0}
(\inf_{z \geq w} (\tau G(\tau, z) )) \geq w$ for all $w
\in
(0, \infty)$.\break According to this fact and the dynamics given in \eqref
{eq: weird dynamics BM}, on the event\break $ \{ \liminf_{t \to T}
(X^\up
_t - X_t) > 0  \}$ one would obtain $\lim_{t \to T} X_t = \infty$
under $\qprob$---indeed, the drift term in the dynamics \eqref{eq:
weird dynamics BM} would dominate (up to a strictly positive random
variable) the quantity $1/(T - t)$ when $t$ approaches $T$, implying
that the behaviour of $X$ itself near $T$ would be explosive. However,
in that case $\lim_{t \to T} (X^\up_t - X_t) = 0$ would hold on $
\{
\liminf_{t \to T} (X^\up_t - X_t) > 0  \}$ under $\qprob$, since
$X_t < \infty$ holds for all $t \in[0, T)$. We conclude that $\qprob
[ \liminf_{t \to T} (X^\up_t - X_t) = 0  ] = 1$.
\end{pf}

\begin{rem}
The fact that $\qprob [ \liminf_{t \to T} (X^\up_t - X_t) = 0
 ]
= 1$ is the equivalent of $\qprob[\rho= T] = 1$ that was obtained in
the finite-horizon discrete-time analogue discussed in Example~\ref
{exa: sup in discrete time}. However, in contrast to Example~\ref{exa:
sup in discrete time}, the fact that $\prob [ \lim_{t \to T}
(X^\up
_t - X_t) > 0  ] = 1$ implies that in the present setting $\prob$
and $\qprob$ are singular probabilities on $\F$. (Note also that
$\prob
 [ \liminf_{t \to T} (X^\up_t - X_t) > 0  ] = 1$ implies
$\prob
 [ \lim_{t \to T} L_t = 0  ] = 1$, which directly shows the
singularity of $\prob$ and $\qprob$ on $\F$.)
\end{rem}

\subsection{Last-passage times} Fix $x \in\Real$ and define $\rho:=
\sup\{t \in[0, T)| X_t = x\}$, where one sets $\rho= 0$ if the
previous set is empty. Recalling the definition of the function $F_\mu$
from \eqref{eq: F_BM_fin}, it is straightforward to compute
%
\begin{eqnarray}
\label{eq: Z_BM_fin_last} Z_t &=& \prob[\rho> t | \F_t] \nonumber\\
&=&
F_\mu(T-t, x - X_t) \indic _{\{X_t \leq x\}} +
F_{ - \mu} (T-t, X_t -x) \indic_{\{X_t > x\}}
\\
 \eqntext{\mbox{for }
t \in[0, T).}
\end{eqnarray}
In particular, $Z_0 = \prob[\rho> 0] = 1 - F_{\sign(x) \mu} (T, |x|)$.
Define also the function $h_\mu\dvtx (0, \infty) \mapsto\Real_+$ via
\begin{eqnarray*}
h_\mu(\tau)&:=& - \frac{1}{2} \biggl(\frac{\partial F_\mu}{\partial z} +
\frac{\partial F_{-\mu}}{\partial z} \biggr) (\tau, 0) \\
&= &\frac{1}{\sqrt{2
\pi\tau}} \exp \biggl(-
\frac{\mu^2 \tau}{2} \biggr) - \mu \bigl(1 - 2 \oPhi(\mu \sqrt{\tau}) \bigr)\qquad
\mbox{for } \tau\in(0, \infty).
\end{eqnarray*}
Upon differentiation, one checks that the nonnegative function $h_\mu$
is decreasing in $\tau\in(0, \infty)$.

By a straightforward generalisation of the It\^o--Tanaka formula, one
can write $Z = N - A$, where $N$ is a local martingale (with
necessarily continuous paths) and $A = \int_0^{\cdot} h_\mu(T - t)
\,\ud
\Lambda^X_t (x)$. Recalling that $\prob[\rho> 0] = 1 - F_{\sign(x)
\mu
} (T, |x|)$, it follows from \eqref{eq: real defn of K} that
%
\begin{eqnarray}
\label{eq: K_BM_fin_last} K_t = 1 - \bigl(1 - F_{\sign(x) \mu} \bigl(T, |x|\bigr)
\bigr) \exp \biggl(- \int_0^t
h_\mu(T - s) \,\ud\Lambda^X_s (x) \biggr)
\nonumber
\\[-8pt]
\\[-8pt]
\eqntext{\mbox{for } t \in[0, T).}
\end{eqnarray}
Since $L = Z / (1 - K)$, \eqref{eq: Z_BM_fin_last} and \eqref{eq:
K_BM_fin_last} give a closed-form expression for $L$.

\begin{lem} \label{lem: Q_exist_BM_fim_last}
For all $t \in[0, T)$, it holds that $\expec_\prob[L_t] = 1$.
\end{lem}

\begin{pf}
As in the proof of Lemma~\ref{lem: Q_exist_BM_fim_max}, it will be
shown that $\expec_\prob[L^*_t] < \infty$ holds for all $t \in[0, T)$.
Since $L \leq1 / (1 - K)$ and $h_\mu$ is a decreasing function, for
all $t \in[0, T)$ we obtain the inequality $L^*_t \leq(1 - F_{\sign
(x) \mu} (T, |x|))^{-1} \* \exp ( h_\mu(T - t) \Lambda^X_t (x)
 )$.
Therefore, it suffices to show that $\expec_\prob [ \exp( a
\Lambda^X_t
(x) )  ] < \infty$ holds for all $a \in\Real$ and $t \in\Real_+$.
For this, and in view of Girsanov's theorem and H\"older's inequality,
one may assume that $\mu= 0$. Then the properties of standard Brownian
motion imply that, for $\mu= 0$, the law of $\Lambda^X_t (x)$ under
$\prob$ is stochastically dominated in the first order by the law of
$\Lambda^X_t (0)$ under $\prob$. Furthermore, L\'evy's equivalence
theorem on Brownian local time and maximum of Brownian motion \cite{MR1121940},
Theorem~3.6.17, implies that the law of $\Lambda^X_t (0)$
under $\prob$ is the same as the law of $X^\up_t$ under $\prob$; the
latter is also the same as the law of $|X_t|$ under $\prob$, for which
all exponential moments are finite.
\end{pf}

By Lemma~\ref{lem: Q_exist_BM_fim_last} and the Daniell--Kolmogorov
extension theorem, there exists a probability $\qprob$ on $(\Omega,
\F)$ such that $L_t = (\ud\qprob/ \ud\prob)|_{\F_t}$ holds for all
$t \in[0, T)$. (Remark~\ref{rem: BM_fin_q_exist} becomes again relevant
at this point.) Since $L = Z / (1 - K)$, using \eqref{eq:
Z_BM_fin_last} and \eqref{eq: K_BM_fin_last} we obtain the dynamics of
$L$ as
\begin{eqnarray*}&&
\frac{\ud L_t}{L_t} = \biggl(- \frac{ (\partial F_\mu/ \partial z) (T-t, x
- X_t) }{ F_\mu(T-t, x - X_t)}\indic_{\{X_t \leq x\}}\\
&&\hspace*{38pt}{} +
\frac{
(\partial F_{ - \mu} / \partial z) (T-t, X_t - x) }{ F_{ - \mu} (T-t,
X_t - x)}\indic_{\{X_t > x\}} \biggr) \,\ud (X_t - \mu t),
\end{eqnarray*}
for $t \in[0, T)$. Then a straightforward application of Girsanov's
theorem and \eqref{eq: dF_BM_fin} imply that, under $\qprob$, the
dynamics of $X$ are given by
\begin{eqnarray}
\ud X_t = \bigl(G_\mu(T-t, x - X_t)
\indic_{\{X_t \leq x\}} - G_{ - \mu} (T-t, X_t - x)
\indic_{\{X_t > x\}} \bigr) \,\ud t + \ud W^\qprob_t\nonumber\\
\eqntext{\mbox{for } t \in[0, T),}
\end{eqnarray}
where $W^\qprob$ is a standard Brownian motion on $(\Omega, \bF,
\qprob)$ and the
function $G_\mu$ is defined in \eqref{eq: G_BM_fin}.

\begin{rem}
As was the case in Section~\ref{subsec: time_max_BM_fin}, when the
Brownian motion has zero drift the formulas simplify. In particular,
when $\mu= 0$,
\[
K_t = 1 - \biggl(1 - 2 \oPhi \biggl(\frac{|x|}{\sqrt{T}} \biggr)
\biggr) \exp \biggl(- \frac
{1}{\sqrt{2 \pi}} \int_0^t
\frac{1}{\sqrt{T - s}} \,\ud\Lambda ^X_s (x) \biggr)\qquad
\mbox{for } t \in[0, T)
\]
and, under $\qprob$, the dynamics of $X$ are given by
\begin{eqnarray}
\ud X_t = - \sign(X_t - x) \biggl(\frac{1}{\sqrt{T - t}}
\frac{\phi
(|X_t - x| / \sqrt{T - t} ) }{\oPhi(|X_t - x| / \sqrt{T - t} ) }
\biggr) \,\ud t + \ud W^\qprob_t\nonumber\\ \eqntext{\mbox{for
} t \in[0, T).}
\end{eqnarray}
\end{rem}

\section{The decomposition result of Jeulin and Yor} \label{sec: jeulin-yor}

Let $\rho$ be a $\F_\infty$-measurable random time on $(\Omega, \F, \bF)$. Let
$\bG= (\G_t)_{t \in{\Real_+}}$ be defined via
\[
\G_t = \bigl\{B \in\F_\infty| B \cap\{\rho> t\} =
B_t \cap\{\rho> t\} \mbox{ for some } B_t \in
\F_t \bigr\}, \qquad t \in{\Real_+}.
\]
It is straightforward to check that $\bG$ is a right-continuous
filtration that contains $\bF$, as well as that $\rho$ is a stopping
time on $(\Omega, \bG)$.

Whenever $X$ is a local martingale on $(\Omega, \bF, \prob)$, the Jeulin--Yor
decomposition theorem identifies the Doob--Meyer decomposition of
$X^\rho$ on $(\Omega, \bG, \prob)$. Here, we provide the statement
(Theorem~\ref
{thmm: jeulin-yor}) and a novel proof of the result of Jeulin and Yor
that uses the tools developed in this paper and does not rely on
elements of the theory of progressive filtration enlargements. The
following result, which is basically a consequence of Proposition~\ref
{prop: expect wrt Q}, provides a main ingredient of our approach. It is
useful to recall the collection $(\eta_u)_{u \in[0, 1)}$ from \eqref{eq:
good stop times}.

\begin{lem} \label{lem: loc_mart at random time}
Let $\rho$ be a $\F_\infty$-measurable random time, and $Y$ be a
process such that $\expec_\prob[Y_\rho^*] < \infty$ and $Y^{\eta
_u}$ is
local martingale on $(\Omega, \bF, {\mathbb{Q}_u})$ for all $u \in
[0, 1)$. Then $Y^\rho$ is a
martingale on $(\Omega, \bG, \prob)$.
\end{lem}

\begin{pf}
Using \eqref{eq: change of variables}, observe that $\int_{[0, 1)}
\expec_{\mathbb{Q}_u}[Y^*_{\eta_u}] \,\ud u = \expec_\prob[Y_\rho
^*] < \infty$.
Furthermore, the mapping $[0, 1)\ni u \mapsto\expec_{\mathbb
{Q}_u}[Y^*_{\eta_u}]$ is nondecreasing, as follows from consistency
of the family
$(\qprob_u)_{u \in[0, 1)}$. Therefore, $\expec_{\mathbb
{Q}_u}[Y^*_{\eta_u}] <
\infty$ for all $u \in[0, 1)$. This implies that, actually, $Y^{\eta_u}$
is a uniformly integrable martingale on $(\Omega, \bF, {\mathbb
{Q}_u})$ for all $u \in[0, 1)$.

Fix $s\in\Real_+$ and $t \in(s, \infty)$. Pick $B \in\G_{s}$ and
$B_s \in\F_s$ such that $B \cap\{\rho> s\} = B_s \cap\{\rho> s\}
$. Note that the process $Y^t \indic_{B_s \cap]\!] s, \infty[\![}$
is optional on $(\Omega, \bF)$ and $Y^t_\rho\indic_{B_s \cap\{s <
\rho\}}
= Y^\rho_t \indic_{B_s} \indic_{\{\rho> s \}}$. In view of
Proposition~\ref{prop: expect wrt Q} (with the usual trick of splitting
into positive and negative parts) and the martingale property of
$Y^{\eta_u}$ on $(\Omega, \bF, {\mathbb{Q}_u})$ for all $u \in[0,
1)$, we obtain
\begin{eqnarray*}
\expec_\prob \bigl[Y^\rho_t
\indic_{B_s} \indic_{\{\rho> s \}} \bigr] &=& \int_{[0, 1)}
\expec_{\mathbb{Q}_u} \bigl[Y^{\eta_u}_t
\indic_{B_s} \indic _{\{{\eta_u} > s\}} \bigr] \,\ud u
\\
&=& \int_{[0, 1)} \expec_{\mathbb{Q}_u} \bigl[Y^{\eta_u}_s
\indic_{B_s} \indic_{\{{\eta_u} > s\}} \bigr] \,\ud u =
\expec_\prob \bigl[Y^\rho_s \indic
_{B_s} \indic_{\{\rho> s \}} \bigr].
\end{eqnarray*}
The last equation and the fact that $Y^\rho_t \indic_B = Y^\rho_{s}
\indic_B \indic_{\{\rho\leq s \}} + Y^\rho_t \indic_{B_s}
\indic
_{\{\rho> s\}}$ imply that $\expec_\prob[Y^\rho_t \indic_B ] =
\expec_\prob[Y^\rho_{s} \indic_B ]$. Since $B \in\G_{s}$ is arbitrary,
we obtain $\expec_\prob[Y^\rho_t | \G_s] = Y^\rho_s$, which
establishes the claim.
\end{pf}

What follows is the decomposition theorem of Jeulin and Yor (see \cite
{MR519998}, as well as \cite{MR2380894} for further development), which
in particular implies that for any semi-martingale $X$ on $(\Omega,
\bF, \prob)$,
$X^\rho$ is a semi-martingale on $(\Omega, \bG, \prob)$.


\begin{thmm} \label{thmm: jeulin-yor}
Let $\rho$ be a $\F_\infty$-measurable random time on $(\Omega, \F, \bF, \prob)$ with
associated canonical pair $(K, L)$. Recall the processes $Z$ and $N$
from Section~\ref{subsec: canon pair}. Furthermore, let $X$ be a
process such that $X^{\eta_u}$ is a local martingale on $(\Omega, \bF, \prob)$ for
all $u \in[0, 1)$. Then:
\begin{longlist}[(1)]
\item[(1)] The set-inclusion $[\![ 0, \rho]\!] \subseteq\Gamma:=
 \bigcup_{u \in[0, 1)} [\![ 0, \eta_u ]\!]$ holds modulo $\prob$-  evanescence.
\item[(2)] The processes $\langle L, X \rangle$ and $\langle N, X \rangle
$, each being the
predictable compensator under $\prob$ of $[L, X]$ and $[N, X]$
respectively, are well defined on $\Gamma$.
\item[(3)]$\prob[\inf_{t \in{\Real_+}} L^\rho_{t-} > 0] = 1$ and
$\prob
[\inf_{t \in{\Real_+}} Z^\rho_{t-} > 0] = 1$; therefore, $\prob$-a.s.,
\[
\int_0^{\rho} \frac{1}{L_{t-}} \,\ud\Var \bigl(
\langle L, X \rangle \bigr)_t = \int_0^{\rho}
\frac{1}{Z_{t-}} \,\ud\Var \bigl(\langle N, X \rangle \bigr)_t <
\infty,
\]
where ``$\Var$'' is the operator returning the first variation of a process.
\item[(4)] The process
%
\begin{equation}
\label{eq: jeulin-yor} Y^\rho:= X^\rho- \int_0^{\rho\wedge\cdot}
\frac{1}{L_{t-}} \,\ud \langle L, X \rangle_t = X^\rho-
\int_0^{\rho\wedge\cdot} \frac
{1}{Z_{t-}} \,\ud\langle N, X
\rangle_t
\end{equation}
is a local martingale on $(\Omega, \bG, \prob)$.
\end{longlist}
\end{thmm}

\begin{rem}
Technicalities aside, intuition on the important statement (4) of
Theorem~\ref{thmm: jeulin-yor} follows from Lemma~\ref{lem: loc_mart at
random time} coupled with an application of Girsanov's theorem. Indeed,
if $X^{\eta_u}$ is a martingale on $(\Omega, \bF, \prob)$, $Y^{\eta
_u}$ (in obvious
notation) has (some kind of) the martingale property on $(\Omega, \bF, {\mathbb{Q}_u})$ in
view of Girsanov's theorem and the fact that $L_{\eta_u} = (\ud\qprob
_u / \ud\prob)|_{\F_{\eta_u}}$ for all $u \in[0, 1)$. Then $Y^\rho$
should have (some kind of) the martingale property on $(\Omega, \bG,
\prob)$, as
follows from Lemma~\ref{lem: loc_mart at random time}.

The idea of proving the Jeulin--Yor decomposition theorem via
Girsanov's theorem has also been used by Jeulin and Yor
\cite{MR884713}, Chapter III, page~172. However, Girsanov's theorem there is applied on the
product space $\Omega\times\Real_+$ equipped with the predictable
sigma-algebra. The approach here is more transparent, as we are dealing
with probabilities on $(\Omega, \F, \bF)$.
\end{rem}

\begin{pf*}{Proof of Theorem~\ref{thmm: jeulin-yor}}
Since $\prob[\rho\leq\eta_a] = \int_{[0, 1)} \qprob_u [\eta_u
\leq\eta_a] \,\ud u \geq a$ holds for all $a \in[0, 1)$ by Proposition~\ref
{prop: expect wrt Q}, it follows that $\lim_{a \uparrow1} \prob[\rho
\leq\eta_a] = 1$. Therefore, statement (1) is established.

Fix $u \in[0, 1)$. As $L^{\eta_u}$ is locally bounded (see Lemma~\ref
{lem: loc bdd of L}) and $X^{\eta_u}$ is locally integrable (being a
local martingale) on $(\Omega, \bF, \prob)$, it follows that $\Var
([L, X])^{\eta
_u}$ is locally integrable on $(\Omega, \bF, \prob)$. By \eqref{eq:
defn of L} and
$Z = L (1-K)$, $\Var([N, X])^{\eta_u} = (1 - K_-) \cdot\Var
([L, X])^{\eta_u} \leq\Var([L, X])^{\eta_u}$ implies that
$\Var
([N, X])^{\eta_u}$ is also locally integrable on $(\Omega, \bF,
\prob)$. Since
this holds for all $u \in[0, 1)$, $\langle L, X \rangle$ and
$\langle N, X \rangle$ are
well defined on $\Gamma$, which establishes statement (2).

By Proposition~\ref{prop: num prof of L} $\prob[L_\rho> 0] = 1$;
since $L$ is a nonnegative local martingale on $(\Omega, \bF, \prob
)$, we obtain
$\prob[\inf_{t \in{\Real_+}} L^\rho_{t-} > 0] = 1$. Then $\prob
[\inf_{t \in{\Real_+}} Z^\rho_{t-} > 0] = 1$ follows from $\prob
[\inf_{t \in{\Real_+}} L^\rho_{t-} > 0] = 1$, coupled with $\prob
[\sup_{t \in{\Real_+}} K^\rho_{t-} < 1] = \prob[K_{\rho-} < 1] =
1$ (see
Proposition~\ref{prop: unif domination}) and the relationship $Z = L (1
- K)$, holding up to $\prob$-evanescence. This shows the validity of
statement (3).

We proceed to the proof of statement (4). Since $[\![ 0, \rho]\!]
\subseteq\Gamma$ holds modulo $\prob$-evanescence, standard
localisation arguments imply the existence of a nondecreasing sequence
$(\tau_n)_{n \in\Natural}$ of stopping times on $(\Omega, \bF)$
and a $(0,
\infty)$-valued nondecreasing sequence $(C_n)_{n \in\Natural}$ such
that all
the following conditions are met: $\tau_n \leq\eta_{1 - 1/n}$ for all
$n \in\Natural$; $\lim_{n \to\infty}\prob[\rho\leq\tau_n] =
1$; $\lim_{n \to\infty}C_n =
\infty$;
$\prob [ \inf_{t \in{\Real_+}} L^{\tau_n}_{t-} \geq C_n^{-1/2}
] =
1$ for all $n \in\Natural$; $\prob [ [L, L]_{\tau_n} \leq C_n
 ] = 1$ for
all $n \in\Natural$; $\expec_\prob[X_{\tau_n}^*] < \infty$ for
all $n \in\Natural$. [In
particular, the last condition implies that $X^{\tau_n}$ is a
martingale on $(\Omega, \bF, \prob)$ for all $n \in\Natural$.]

Suppose we can show that $Y^{\rho\wedge\tau_n}$ is a local martingale
on $(\Omega, \bG, \prob)$ for all $n \in\Natural$. Then, setting
$\tau'_n:= \tau_n
\indic
_{\{\rho> \tau_n\}}+ \infty\indic_{\{\rho\leq\tau_n\}}$, we
have that $(\tau'_n)_{n \in\Natural}$ is a nondecreasing\vspace*{1pt} sequence
of stopping
times on $(\Omega, \bG)$ such that $\prob[\lim_{n \to\infty}\tau
'_n = \infty] = 1$
and $Y^{\rho\wedge\tau'_n} = Y^{\rho\wedge\tau_n}$ is a local
martingale on $(\Omega, \bG, \prob)$ for all $n \in\Natural$; it
will then follow that
$Y^{\rho}$ is a local martingale on $(\Omega, \bG, \prob)$.
Therefore, it suffices
to show that $Y^{\rho\wedge\tau_n}$ is a local martingale on
$(\Omega, \bG, \prob)
$ for all $n \in\Natural$.

We estimate $\Var([L, X])_{\tau_n} \leq[L, L]^{1/2}_{\tau_n}
[X,X]^{1/2}_{\tau_n} \leq C_n^{- 1/2} [X, X]^{1/2}_{\tau_n}$.
Using \eqref{eq: jeulin-yor} and the fact that $\inf_{t \in{\Real_+}}
L^{\tau
_n}_{t-} \geq C^{1/2}_n$, we obtain $Y^*_{\rho\wedge\tau_n} \leq
X^*_{\tau_n} +\break C_n [X, X]^{1/2}_{\tau_n}$. In view of the Davis
inequality, $\expec_\prob[X_{\tau_n}^*] < \infty$ implies\break $\expec
_\prob
[ [X, X]^{1/2}_{\tau_n}  ] < \infty$; therefore, $\expec_\prob
[Y^*_{\rho\wedge\tau_n}] < \infty$. Furthermore, $Y^{ \tau_n
\wedge
\eta_u}$ is a local martingale on $(\Omega, \bF, {\mathbb{Q}_u})$
for all $u \in[0, 1)$.
Indeed, given that, $\prob$-a.s., $\int_0^{{\tau_n} \wedge\eta_u}
( 1
/ L_{t-}) \,\ud\Var(\langle L, X \rangle)_t < \infty$, this follows
in a
straightforward way from Girsanov's theorem. Then $Y^{ \rho\wedge
{\tau_n}}$ is a martingale on $(\Omega, \bG, \prob)$, as follows
from Lemma~\ref
{lem: loc_mart at random time}.
\end{pf*}

\section*{Acknowledgment}
The author would like to thank Monique Jeanblanc for valuable input,
as well as Dan Ren for a careful reading of the manuscript and many
constructive comments.

%
%


\printaddresses
\end{document}